%%%%%%%%%%%%%%%%%%%%%%%%%%%%%%%%%%%%%
%\documentclass[reqno,10pt,centertags,draft]{amsart}
\documentclass[reqno,10pt,centertags]{amsart} 
\usepackage{amsmath,amsthm,amscd,amssymb,latexsym,esint,upref,stmaryrd,
enumerate,verbatim,yfonts,bm,mathtools}
\usepackage{color}
%\date{\today}
%%%%%%%%%%%%%%%%%%%%%%%%%%%%%%%%%%%%%
%HERE you TURN ON/OFF the tags for eqs., refs., etc.%
%\usepackage[nomsgs,ignoreunlbld]{refcheck}
%\usepackage{showkeys}
\usepackage{hyperref} 
\newcommand*{\mailto}[1]{\href{mailto:#1}{\nolinkurl{#1}}}

%\newcommand{\doi}[1]{\href{http://dx.doi.org/#1}{DOI:#1}}

%\documentclass[draft, reqno]{amsart}
%\usepackage{amssymb}
%\usepackage{amscd}
%\usepackage{amsmath}
%\usepackage{amsthm}
%\usepackage{verbatim}

%%%%% BOLD AND CALLIGRAPHIC LETTERS %%%%%%

\newcommand{\bbC}{{\mathbb{C}}}

\newcommand{\bbN}{{\mathbb{N}}}

\newcommand{\bbR}{{\mathbb{R}}}
\newcommand{\bbS}{{\mathbb{S}}}

\newcommand{\cB}{{\mathcal B}}

\newcommand{\cH}{{\mathcal H}}

\newcommand{\cR}{{\mathcal R}}

\newcommand{\beq}{\begin{equation}}
\newcommand{\enq}{\end{equation}}

%%%%%%%%%%%%% GREEK LETTERS%%%%%%%%%%%

%\newcommand{\f}{\varphi}

\newcommand{\z}{\zeta}

%%%%%%%%%% OPERATOR NAMES AND SUCH %%%%%%%%%%%%

\DeclareMathOperator{\dom}{dom}

\DeclareMathOperator*{\sgn}{sgn}

\renewcommand{\Re}{\text{\rm Re}}
\renewcommand{\Im}{\text{\rm Im}}
\renewcommand{\ln}{\text{\rm ln}}

\newcommand{\loc}{\operatorname{loc}}

\newcommand{\dott}{\,\cdot\,}

\newcommand{\no}{\notag}
\newcommand{\lb}{\label}
\newcommand{\f}{\frac}

\newcommand{\ol}{\overline}

\newcommand{\wti}{\widetilde}
\newcommand{\Oh}{O}

\newcommand{\bi}{\bibitem}

\renewcommand{\ge}{\geqslant}

\newcommand{\WtwoN}{[W^{2,2}(\bbR^n)]^N} % replaces \cH^2(\bbR^n)

\let\geq\geqslant
\let\leq\leqslant

\newcommand{\LN}{[L^2(\bbR^n)]^N} % replaces \cH(\bbR^n)
\newcommand{\WoneN}{[W^{1,2}(\bbR^n)]^N} % replaces \cH^1(\bbR^n)

\makeatletter
\def\theequation{\@arabic\c@equation}
%\newcommand{\erpm}[1]{{$(\ref{#1}\pm)$}} 
%\newcommand{\erp}[1]{{$(\ref{#1}+)$}}
%\newcommand{\erm}[1]{{$(\ref{#1}-)$}}
%\renewcommand{\theequation}{\thesection.\arabic{equation}}
%\renewcommand{\P}{{\mathcal P}}

%%%%%%%%%%%% THEOREM ENVIRONMENTS %%%%%%%%%%%%

\allowdisplaybreaks 
\numberwithin{equation}{section}

\newtheorem{theorem}{Theorem}[section]

\newtheorem{lemma}[theorem]{Lemma}

\newtheorem{definition}[theorem]{Definition}
\newtheorem{hypothesis}[theorem]{Hypothesis}

\theoremstyle{remark}
\newtheorem{remark}[theorem]{Remark}

%%%%%%%%%%%%%% END OF DEFINITIONS %%%%%%%%%%

\begin{document}

\title[Absence of Threshold Resonances]{On Absence of Threshold Resonances for Schr\"odinger and Dirac Operators} 

\author[F.\ Gesztesy]{Fritz Gesztesy}
\address{Department of Mathematics, 
Baylor University, One Bear Place \#97328,
Waco, TX 76798-7328, USA}
\email{\mailto{Fritz\_Gesztesy@baylor.edu}}
%\email{Fritz$\_$Gesztesy@baylor.edu}
\urladdr{\url{http://www.baylor.edu/math/index.php?id=935340}}
%\urladdr{http://www.baylor.edu/math/index.php?id=935340}

\author[R.\ Nichols]{Roger Nichols}
\address{Department of Mathematics, The University of Tennessee at Chattanooga, 
415 EMCS Building, Dept. 6956, 615 McCallie Ave, Chattanooga, TN 37403, USA}
\email{\mailto{Roger-Nichols@utc.edu}}
%\email{Roger-Nichols@utc.edu}
\urladdr{\url{http://www.utc.edu/faculty/roger-nichols/index.php}}
%\urladdr{http://www.utc.edu/faculty/roger-nichols/index.php}

\dedicatory{Dedicated with great pleasure to Gis\`ele Goldstein}

\date{\today}
%\thanks{} 
\thanks{Appeared in {\it Discrete Cont. Dyn. Syst, Ser. S}, {\bf 13}, 3427--3460 (2020).}
\subjclass[2010]{Primary: 35J10, 35Q41, 45P05, 47A11, 47G10; Secondary: 35Q40, 47A10, 81Q10.}
\keywords{Threshold resonances, threshold eigenvalues, Schr\"odinger operators, Dirac operators.}

%%%%%%%%%%%%%%%%%%%%%%%%%%%%%%%
\begin{abstract} 
Using a unified approach employing a homogeneous Lippmann-Schwinger-type equation satisfied by resonance functions and basic facts on Riesz potentials, we discuss the absence of threshold resonances for Dirac and Schr\"odinger operators with sufficiently short-range interactions in general space dimensions.  

More specifically, assuming a sufficient power law decay of potentials, we derive the absence of zero-energy resonances for massless Dirac operators in space dimensions $n \geq 3$, the absence of resonances at $\pm m$ for massive Dirac operators (with mass $m > 0$) in dimensions $n \geq 5$, and recall the well-known case of absence of zero-energy resonances for Schr\"odinger operators in dimension $n \geq 5$. 
\end{abstract}
%%%%%%%%%%%%%%%%%%%%%%%%%%%%%%%

\maketitle

%\newpage 

%{\scriptsize{\tableofcontents}}
%\normalsize

%%%%%%%%%%%%%%%%%%%%%%%%%%%%%%%
%%%%%%%%%%%%%%%%%%%%%%%%%%%%%%%
\section{Introduction} \lb{s1} 
%%%%%%%%%%%%%%%%%%%%%%%%%%%%%%%
%%%%%%%%%%%%%%%%%%%%%%%%%%%%%%%

{\it Happy Birthday, Gis\`ele, we sincerely hope that our modest contribution to threshold resonances of Schr\"odinger and Dirac operators will create some joy.} 

The principal purpose of this paper is a systematic investigation of threshold resonances, more precisely, the absence of the latter, for massless and massive Dirac operators, and for Schr\"odinger operators in general space dimensions $n \in \bbN$, $n \geq 2$, given sufficiently fast decreasing interaction potentials at infinity (i.e., short-range interactions). 

In the case that $(- \infty, E_0)$ and/or $(E_1,E_2)$, $E_j \in \bbR$, $j = 0,1,2$,  are essential (resp., absolutely continuous) spectral gaps of a self-adjoint unbounded operator $A$ in some complex, separable Hilbert space 
$\cH$, then typically, the numbers $E_0$ and/or the numbers $E_1, E_2$ are called threshold energies of $A$. The intuition behind the term ``threshold'' being that if the coupling constant of an appropriate interaction potential is varied, then eigenvalues will eventually ``emerge'' out of the continuous spectrum of the operator $A$ and enter the spectral gap $(- \infty, E_0)$ and/or $(E_0, E_1)$. The precise phenomenon behind this intuition is somewhat involved, depending, in particular, on the (possibly singular) behavior of the resolvent of $A$ at the points 
$E_j$, $j=0,1,2$. In the concrete case of Dirac and Schr\"odinger operators with sufficient short-range interactions at hand, this behavior is well understood (see, e.g., \cite{Kl82}, \cite{Kl85}, \cite{KS80} and the literature cited therein) and $E_0 = 0$ is then a threshold for Schr\"odinger operators $h$, with $\sigma_{ess}(h) = [0,\infty)$ (cf.\ \eqref{2.14A}--\eqref{2.18A}), whereas $\pm m$ are thresholds for massive Dirac operators $H(m)$ corresponding to mass $m > 0$, with 
$\sigma_{ess}(H(m)) = (-\infty, -m] \cup [m,\infty)$ (cf.\ \eqref{4.8A}--\eqref{4.11A}). The case of massless Dirac operators $H$ is more intricate as $\sigma_{ess} (H) = \bbR$ (cf.\ \eqref{4.2}, \eqref{4.6A}), and hence $H$ exhibits no spectral gap. However, since the potential coefficients tend to zero at infinity, $0$ is still a threshold point of $H$ that is known to possibly support an eigenvalue and/or a resonance. 

Threshold resonances for Dirac and Schr\"odinger operators then are associated with distributional solutions 
$\psi$ of $h \psi = 0$, or $\Psi_{\pm m}$ of $H(m) \Psi_{\pm m} = \pm m \Psi_{\pm m}$, respectively, 
distributional solutions $\Psi$ of $H \Psi = 0$, that do not belong to the domains of $h, H(m)$, respectively $H$, but these functions are ``close'' to belonging to the respective operator domains in the sense that they (actually, their components) belong to $L^p(\bbR^n)$ for $p \in (p_0, \infty) \cup \{\infty\}$ for appropriate $p_0 > 2$. More precisely, we then recall in our principal Section \ref{s3} that Schr\"odinger operators have no zero-energy resonances in dimensions $n \geq 5$ (a well-known result, cf.\ Remark \ref{r9.8a}\,$(iii)$), that massless Dirac operators have no zero-energy resonances in dimension $n \geq 3$ (the case $n=3$ was well-known, cf.\ 
Remark \ref{r9.8}\,$(iii)$), and that massive Dirac operators have no resonances at $\pm m$ in dimensions $n \geq 5$. We prove these results in a unified manner employing a homogeneous Lippmann--Schwinger-type equation satisfied by threshold resonance functions and the use of basic properties of Riesz potentials. While our power law decay assumptions of the potentials at infinity are not optimal, optimality was not the main motivation for writing this paper. Instead, we wanted to present a simple, yet unified, approach to the question of absence of threshold resonances with the added bonus that our results in the massless case for $n \geq 4$ and in the massive case appear to be new. Since we explicitly permit matrix-valued potentials in the (massless and massive) Dirac case, we note that electric and magnetic potentials are included in our treatment (cf.\ Remark \ref{r3.2}). 

We also emphasize that threshold resonances and/or threshold eigenvalues of course profoundly influence the (singularity) behavior of resolvents at threshold energies, the threshold behavior of (fixed energy, or on-shell) scattering matrices, as well as dispersive (i.e., large time) estimates for wave functions. These issues have received an abundance of attention since the late 1970's in connection with Schr\"odinger operators, and very recently especially in the context of (massless and massive) Dirac operators (the case of massless Dirac operators having applications to graphene). While a complete bibliography in this connection is clearly beyond the scope of our paper, we refer to \cite{AMN99}--\cite{BE03}, \cite[Ch.~4]{BE11}, \cite{BES08}, 
\cite{BV15}--\cite{BGK87}, \cite{El00}--\cite{ES01}, \cite{FLL86}, \cite{GH87}, \cite{Je80}--\cite{KOY15}, \cite{Kl90}--\cite{Mu82}, \cite{Pe08}--\cite{SU15}, \cite{To17}--\cite{ZG13}, 
and the literature therein to demonstrate some of the interest generated by this circle of ideas. We also mention 
the relevance of threshold states in connection with the Witten index for non-Fredholm operators, see, for instance, \cite{CGGLPSZ16b}--\cite{CGPST17}. 

In Section \ref{s2} we present the necessary background for Dirac and Schr\"odinger operators, including a description of Green's functions (resp., matrices) and their threshold asymptotics in the free case, that is, in the absence of interaction potentials in terms of appropriate Hankel functions. Appendix \ref{sA} discusses the failure of our technique to produce essential boundedness (of components) of threshold resonances and eigenfunctions for space dimensions sufficiently large. 

We conclude this introduction with some comments on the notation employed in this paper: 
If $\cH$ denotes a separable, complex Hilbert space, $I_{\cH}$ represents the identity operator in $\cH$, but we simplify this to $I_N$ in the finite-dimensional case $\bbC^N$, $N \in \bbN$.  

If $T$ is a linear operator mapping (a subspace of) a Hilbert space into another, then 
$\dom(T)$ and $\ker(T)$ denote the domain and kernel (i.e., null space) of $T$. 
%The closure of a closable operator $S$ is denoted by $\ol S$. 
The spectrum, point spectrum (the set of eigenvalues), and the essential spectrum of a closed linear operator in $\cH$ will be denoted by $\sigma(\, \cdot \,)$, $\sigma_p(\, \cdot \,)$, and $\sigma_{ess}(\, \cdot \,)$, respectively. Similarly, the absolutely continuous and singularly continuous spectrum of a self-adjoint operator in $\cH$ are denoted by $\sigma_{ac}(\, \cdot \,)$ and $\sigma_{sc}(\, \cdot \,)$.

The Banach space of bounded  linear operators on a separable complex Hilbert space $\cH$ is denoted 
by $\cB(\cH)$. 

If $p \in [1,\infty)\cup \{\infty\}$, then $p' \in [1,\infty) \cup \{\infty\}$ denotes its conjugate index, that is, $p':= (1-1/p)^{-1}$. If Lebesgue measure is understood, we simply write $L^p(M)$, $M \subseteq \bbR^n$ measurable, $n \in \bbN$, instead of the more elaborate notation $L^p(M; d^nx)$. 
$\lfloor \, \cdot \, \rfloor$ denotes the floor function on $\bbR$, that is, $\lfloor x \rfloor$ characterizes the largest integer less than or equal to $x \in \bbR$. Finally, if $x=(x_1,\ldots,x_n)\in \bbR^n$, $n \in \bbN$, then we abbreviate $\langle x \rangle := (1+|x|^2)^{1/2}$.

%%%%%%%%%%%%%%%%%%%%%%%%%%%%%%
%%%%%%%%%%%%%%%%%%%%%%%%%%%%%%
\section{Some Background Material} \lb{s2}
%%%%%%%%%%%%%%%%%%%%%%%%%%%%%%
%%%%%%%%%%%%%%%%%%%%%%%%%%%%%%

This preparatory section is primarily devoted to various results on Green's functions for the free Schr\"odinger operator (i.e., minus the Laplacian) and free (massive and massless) Dirac operators (i.e., in the absence of interaction potentials) in dimensions $n \in \bbN$, $n \geq 2$.  

To rigorously define the case of free massless $n$-dimensional Dirac operators, $n \geq 2$, we now 
introduce the following set of basic hypotheses assumed for the remainder of this manuscript.

%%%%%%%%%%%
\begin{hypothesis} \lb{h3.1}  Let $ n \in \bbN$, $n\geq 2$. \\[1mm] 
$(i)$ Set $N=2^{\lfloor(n+1)/2\rfloor}$ and let $\alpha_j$, $1\leq j\leq n$, 
$\alpha_{n+1} := \beta$, denote $n+1$ anti-commuting Hermitian $N\times N$ matrices with squares equal to $I_N$, that is, 
\begin{equation}\lb{2.1}
\alpha_j^*=\alpha_j,\quad \alpha_j\alpha_k + \alpha_k\alpha_j = 2\delta_{j,k}I_N, 
\quad 1\leq j,k\leq n+1.
\end{equation}
$(ii)$  Introduce 
in $\LN$ the free massless Dirac operator
\begin{equation}
H_0 = \alpha  \cdot (-i \nabla) = \sum_{j=1}^n \alpha_j (-i \partial_j),\quad \dom(H_0) = \WoneN,  \lb{2.2}
\end{equation}
where $\partial_j = \partial / \partial x_j$, $1 \leq j \leq n$. \\[1mm] 
$(iii)$ Next, consider the self-adjoint matrix-valued potential 
$V = \{V_{\ell,m}\}_{1 \leq \ell,m \leq N}$ satisfying for some fixed $\rho \in (1,\infty)$, $C \in (0,\infty)$, 
\begin{equation}
V \in [L^{\infty} (\bbR^n)]^{N \times N}, \quad 
|V_{\ell,m}(x)| \leq C \langle x \rangle^{- \rho} \, \text{ for a.e.~$x \in \bbR^n$, $1 \leq \ell,m \leq N$.}    \lb{4.1}
\end{equation}
Under these assumptions on $V$, the massless Dirac operator $H$ in $\LN$ is defined via 
\begin{equation}
H = H_0 + V, \quad \dom(H) = \dom(H_0) = \WoneN.  \lb{4.2}
\end{equation}
\end{hypothesis}
%%%%%%%%%%%

We recall that 
\begin{equation}
[L^2(\bbR^n)]^N = L^2(\bbR^n; \bbC^N), \quad [W^{1,2}(\bbR^n)]^N = W^{1,2}(\bbR^n; \bbC^N), \, 
\text{ etc.}
\end{equation}

Then $H_0$ and $H$ are self-adjoint in $\LN$, 
with essential spectrum covering the 
entire real line,
\begin{equation}
\sigma_{ess} (H) = \sigma_{ess} (H_0) = \sigma (H_0) = \bbR,    \lb{4.6A}
\end{equation}
a consequence of relative compactness of $V$ with respect to $H_0$. In addition,
\begin{equation}
\sigma_{ac}(H_0) = \bbR, \quad \sigma_p(H_0) = \sigma_{sc}(H_0) = \emptyset.    \lb{4.7A}
\end{equation}

We also recall that the massive free Dirac operator in $\LN$ associated 
with the mass parameter $m > 0$ is of the form (with $\beta = \alpha_{n+1}$)
\begin{equation}
H_0(m) = H_0 + m \beta, \quad \dom(H_0(m)) = \WoneN, \; m > 0,   \lb{4.8} 
\end{equation}
and the corresponding interacting massive Dirac operator in $\LN$ is given by
\begin{equation}
H(m) = H_0(m) + V = H_0 + m \beta + V, \quad \dom(H(m)) = \WoneN, \; m > 0.    \lb{4.8A} 
\end{equation}
In this case,
\begin{equation}
\sigma_{ess} (H(m)) = \sigma_{ess} (H_0(m)) = \sigma (H_0(m)) = (-\infty,-m] \cup [m,\infty), \quad m > 0, 
\lb{4.10A} 
\end{equation}
and 
\begin{equation}
\sigma_{ac}(H_0(m)) = (- \infty,-m] \cup [m,\infty), \quad 
\sigma_p(H_0(m)) = \sigma_{sc}(H_0(m)) = \emptyset, \quad m > 0.     \lb{4.11A}
\end{equation}

In the special one-dimensional case $n=1$, one can choose for $\alpha_1$ either a real constant or one of the three Pauli matrices. Similarly, in the massive case, $\beta$ would typically be a second Pauli matrix (different from $\alpha_1$). For simplicity we confine ourselves to $n \in \bbN$, 
$n \geq 2$, in the following. 

Employing the relations \eqref{2.1}, one observes that 
\begin{equation} 
H_0^2 = - I_N \Delta, \quad \dom\big(H_0^2\big) = \WtwoN.   \lb{2.7} 
\end{equation}

%%%%%%%
\begin{remark} \lb{r3.2}
Since we permit a (sufficiently decaying) matrix-valued potential $V$ in $H$, this includes, in particular, the case of electromagnetic interactions introduced via minimal coupling, that is, $V$ 
describes also special cases of the form,
\begin{equation}
H(v,A) := \alpha \cdot (-i \nabla - A) + v I_N = H_0 + [v I_N - \alpha \cdot A], \quad 
\dom(H(v,A)) = \WoneN,
\end{equation}
where $(v,A)$ represent the electromagnetic potentials on $\bbR^n$, with 
$v: \bbR^n \to \bbR$, $v \in L^{\infty}(\bbR^n)$, $A = (A_1,\dots,A_n)$, $A_j: \bbR^n \to \bbR$, 
$A_j \in L^{\infty}(\bbR^n)$, $ 1 \leq j \leq n$, and for some fixed $\rho > 1$, $C \in (0,\infty)$, 
\begin{equation}
|v(x)| + |A_j(x)| \leq C \langle x \rangle^{- \rho}, \quad x \in \bbR^n, \; 1 \leq j \leq n.  
 \lb{4.14}
\end{equation}
The analogous remark applies of course to the massive case with $H_0$ replaced by $H_0(m)$, $m > 0$. 
 \hfill $\diamond$
\end{remark}
%%%%%%%

We continue with the self-adjoint Laplacian in $L^2(\bbR^n)$,
\begin{equation} 
h_0 = - \Delta, \quad \dom(h_0) = W^{2,2}(\bbR^n),      \lb{2.14} 
\end{equation}
and the interacting Schr\"odinger operator in $L^2(\bbR^n)$ given by
\begin{equation} 
h = h_0 + v, \quad \dom(h) = W^{2,2}(\bbR^n),      \lb{2.14A}   
\end{equation}
where $v \in L^{\infty}(\bbR^n)$, $v$ real-valued, satisfies the estimate \eqref{4.14} (for simplicity). 
In this case,
\begin{equation}
\sigma_{ess} (h) = \sigma_{ess} (h_0) = \sigma (h_0) = [0,\infty),    \lb{2.17A} 
\end{equation}
and 
\begin{equation}
\sigma_{ac}(h_0) = [0,\infty), \quad 
\sigma_p(h_0) = \sigma_{sc}(h_0) = \emptyset.     \lb{2.18A}
\end{equation}

The Green's function of $h_0$, denoted by $g_0(z; \, \cdot \,, \, \cdot \,)$, is of the form,
\begin{align}
& g_{0}(z;x,y) := (h_0 - z I)^{-1}(x,y)   \no \\ 
& \quad = \begin{cases} (i/2) z^{- 1/2} e^{i z^{1/2} |x-y|}, & n =1, \; z\in\bbC\backslash\{0\}, \\[2mm]
(i/4) \big(2\pi z^{-1/2} |x - y|\big)^{(2-n)/2} 
H^{(1)}_{(n-2)/2}\big(z^{1/2}|x - y|\big), & n\ge 2, \; z\in\bbC\backslash\{0\}, 
\end{cases}    \lb{5.2}  \no \\
& \hspace*{6.3cm}   \Im\big(z^{1/2}\big) > 0, 
\; x, y \in\bbR^n, \; x \neq y, 
\end{align}
and for $z=0$, $n \geq 3$, 
\begin{align}
\begin{split} 
g_0(0;x,y) &= \displaystyle{\f{1}{(n-2) \omega_{n-1}} |x - y|^{2-n}}    \\
&= 4^{-1} \pi^{- n/2} \Gamma((n-2)/2) |x - y|^{2-n}, \quad n \geq 3, \;  \; x, y \in\bbR^n, \; x \neq y.   \lb{5.3} 
\end{split} 
\end{align}
Here $H^{(1)}_{\nu}(\, \cdot \,)$ denotes the Hankel function of the first kind 
with index $\nu\geq 0$ (cf.\ \cite[Sect.\ 9.1]{AS72}) and 
$\omega_{n-1}=2\pi^{n/2}/\Gamma(n/2)$ ($\Gamma(\, \cdot \,)$ the Gamma function, 
cf.\ \cite[Sect.\ 6.1]{AS72}) represents the area of the unit sphere $S^{n-1}$ in $\bbR^n$. 

As $z\to 0$, $g_0(z; \, \cdot \,, \, \cdot \,)$ is continuous on the off-diagonal for $n\geq 3$, 
\begin{align}
\begin{split}
\lim_{\substack{z \to 0\\z \in \bbC \backslash \{0\}}}  g_0(z;x,y) = g_0(0;x,y) 
= \f{1}{(n-2) \omega_{n-1}} |x - y|^{2-n},& \\ 
 x, y \in\bbR^n, \, x \neq y, \; n \in \bbN, \; n \ge 3,&    \lb{5.3a} 
 \end{split} 
\end{align} 
but blows up for $n=1$ as 
\begin{align}
& g_0(z;x,y) \underset{\substack{z\to 0 \\ z\in \bbC \backslash\{0\}}}{=} 
(i/2) z^{-1/2} - 2^{-1} |x-y| + \Oh\big(z^{1/2} |x-y|^2\big), \quad x, y \in \bbR, 
\end{align}
and for $n=2$ as
\begin{align} 
\begin{split} 
& g_0(z;x,y) \underset{\substack{z\to 0 \\ z\in \bbC \backslash\{0\}}}{=} 
-\f{1}{2\pi} \ln\big(z^{1/2}|x - y|/2\big)
\big[1+\Oh\big(z|x - y|^2\big)\big] 
+ \f{1}{2\pi} \psi(1)     \\
& \hspace*{2.2cm}  + \Oh\big(|z||x - y|^2\big),   \quad 
 x, y \in\bbR^2, \, x \neq y.    \lb{5.4} 
 \end{split} 
\end{align} 
Here $\psi(w)=\Gamma'(w)/\Gamma(w)$ denotes the digamma function 
(cf. \cite[Sect.\,6.3]{AS72}). 

In connection with the free massive Dirac operator 
$H_0(m) = H_0 + m \, \beta$, $m > 0$, one computes,
\begin{align}
\begin{split} 
(H_0(m) - z I)^{-1} &= (H_0(m) + z I) \big(H_0(m)^2 - z^2 I\big)^{-1}     \\ 
& = (- i \alpha \cdot \nabla + m \, \beta + z I)\big(h_0 - (z^2 - m^2) I\big)^{-1} I_N,    \lb{5.5a} 
\end{split} 
\end{align}
employing 
\begin{equation}
H_0(m)^2 = (h_0 + m^2 I) I_N. 
\end{equation}

Assuming 
\begin{equation}
m > 0, \; z \in \bbC \backslash (\bbR \backslash [-m,m]), \; \Im\big(z^2 - m^2\big)^{1/2} > 0, \; 
x, y \in \bbR^n, \, x \neq y, \; n \in \bbN, \, n \geq 2,    \lb{5.7}
\end{equation}
and exploiting \eqref{5.5a}, one thus obtains for the Green's function 
$G_0(m,z;\, \cdot \,,\, \cdot \,)$ 
of $H_0(m)$, 
\begin{align}
& G_0(m,z;x,y) := (H_0(m) - z I)^{-1}(x,y)    \no \\
& \quad = i 4^{-1} (2 \pi)^{(2-n)/2} |x - y|^{2-n}(m \, \beta + z I_N)  \no \\
& \qquad \times  \big[\big(z^2 - m^2\big)^{1/2} |x - y|\big]^{(n-2)/2} 
H_{(n-2)/2}^{(1)} \big(\big(z^2 - m^2\big)^{1/2} |x - y|\big)     \no \\
& \qquad - 4^{-1} (2 \pi)^{(2-n)/2} |x - y|^{1 - n} \, \alpha \cdot \f{(x - y)}{|x - y|}    \no \\
& \qquad \quad \times \big[\big(z^2 - m^2\big)^{1/2} |x - y|\big]^{n/2} 
H_{n/2}^{(1)} \big(\big(z^2 - m^2\big)^{1/2} |x - y|\big).    
\end{align}
Here we employed the identity (\cite[p.~361]{AS72}),
\begin{equation}
\big[H_{\nu}^{(1)} (\zeta)\big]' = - H_{\nu + 1}^{(1)}(\zeta) + \nu \, \zeta^{-1} H_{\nu}^{(1)}(\zeta), \quad 
\nu, \zeta \in \bbC.  
\end{equation}

We also recall the asymptotic behavior (cf.\ \cite[p.~360]{AS72}, \cite[p.~723--724]{JN01})
\begin{align}
& H_0^{(1)} (\zeta)  \underset{\substack{\zeta\to 0 \\ \zeta\in \bbC \backslash\{0\}}}{=} 
(2i/\pi) \ln(\zeta) + \Oh\big(|\ln(\zeta)| |\zeta|^2\big),  \lb{5.9} \\
& H_{\nu}^{(1)} (\zeta)  \underset{\substack{\zeta\to 0 \\ \zeta\in \bbC \backslash\{0\}}}{=} 
- (i/\pi) 2^{\nu} \Gamma(\nu) \zeta^{- \nu} 
+ \begin{cases} \Oh\big(|\zeta|^{\min(\nu, - \nu + 2)}\big), & \nu \notin \bbN, \\
\Oh\big(|\ln(\zeta)| |\zeta|^{\nu}\big) + \Oh\big(\zeta^{ - \nu + 2}\big), & \nu \in \bbN, \end{cases}  \lb{5.10} \\
& \hspace*{9.6cm} \Re(\nu) > 0,  \no \\
& H_{\nu}^{(1)} (\zeta) \underset{\zeta \to \infty}{=} (2/\pi)^{1/2} \zeta^{-1/2} e^{i [\zeta - (\nu \pi/2) - (\pi/4)]}, 
\quad \nu \geq 0, \; \Im(\zeta) \geq 0.   \lb{5.11} 
\end{align}

Equations \eqref{5.9}, \eqref{5.10} reveal the facts (still assuming \eqref{5.7}),
\begin{align}
& \lim_{\substack{z \to \pm m \\ z \in \bbC \backslash \{\pm m\}}} 
G_0(m,z; x,y) = 4^{-1} \pi^{- n/2} \Gamma((n-2)/2) |x - y|^{2-n} (m \, \beta \pm m I_N)     \lb{5.14} \\
& \quad + i 2^{-1} \pi^{-n/2} \Gamma(n/2) \, \alpha \cdot \f{(x - y)}{|x - y|^n}, 
\quad m > 0, \; x, y \in \bbR^n, \, x \neq y, \; n \in \bbN, \, n \geq 3,    \no \\
& G_0(m,z; x,y) \underset{\substack{z \to \pm m \\ z \in \bbC \backslash \{\pm m\}}}{=}  
- (4 \pi)^{-1} \ln\big(z^2 - m^2\big) (m \, \beta \pm m I_2)        \no \\
&  \quad  - (2 \pi)^{-1} \ln(|x - y|) (m \, \beta \pm m I_2) 
+ i (2 \pi)^{-1} \, \alpha \cdot \f{(x - y)}{|x - y|^2}    \lb{5.15} \\
&  \quad  
+ \Oh\big(\big(z^2 - m^2\big) \ln\big(z^2 - m^2\big)\big), 
\quad m > 0, \; x, y \in \bbR^2, \, x \neq y.  \no
\end{align}
(Here the remainder term $\Oh\big(\big(z^2 - m^2\big) \ln\big(z^2 - m^2\big)\big)$ depends 
on $x, y \in \bbR^2$, but this is of no concern at this point.) 
In particular, $G_0(m,z; \, \cdot \, , \, \cdot \,)$ blows up logarithmically as $z \to \pm m$ 
in dimensions $n=2$, just as $g_0(z, \, \cdot \, , \, \cdot \,)$ does as $z \to 0$. 

By contrast, the massless case is quite different and assuming 
\begin{equation}
z \in \bbC_+, \; x, y \in \bbR^n, \, x \neq y, \; n \in \bbN, \, n \geq 2,    \lb{5.16}
\end{equation}
one computes in the case $m=0$ for the Green's function $G_0(z;\, \cdot \,,\, \cdot \,)$ of $H_0$, 
\begin{align}
& G_0(z;x,y) := (H_0 - z I)^{-1}(x,y)     \no \\ 
& \quad = i 4^{-1} (2 \pi)^{(2-n)/2} |x - y|^{2-n} z \, [z |x - y|]^{(n-2)/2} 
H_{(n-2)/2}^{(1)} (z |x - y|) I_N   \lb{5.17} \\
& \qquad - 4^{-1} (2 \pi)^{(2-n)/2} |x - y|^{1-n} [z |x - y|]^{n/2} H_{n/2}^{(1)} (z |x - y|) \, 
\alpha \cdot \f{(x - y)}{|x - y|}.    \no
\end{align}
The Green's function $G_0(z;\, \cdot \,,\, \cdot \,)$ of $H_0$ continuously extends to $z \in \ol{\bbC_+}$. In addition, in the massless case $m=0$, the limit $z \to 0$ exists\footnote{Our choice of notation $0+i\,0$ in 
$G_0(0+i\,0;x,y)$ indicates that the limit $\lim_{z \to 0}$ is performed in the closed upper half-plane 
$\ol{\bbC_+}$.}, 
\begin{align}
\begin{split} 
& \lim_{\substack{z \to 0, \\ z \in \ol{\bbC_+}\backslash\{0\}}} G_0(z;x,y) := G_0(0+i\,0;x,y)   \\
& \quad = i 2^{-1} \pi^{-n/2} \Gamma(n/2) \, \alpha \cdot \f{(x - y)}{|x - y|^n}, \quad x, y \in \bbR^n, \, x \neq y, \; n \in \bbN, \, n \geq 2,    \lb{5.18} 
\end{split} 
\end{align} 
and no blow up occurs for all $n \in \bbN$, $n \geq 2$. This observation is consistent with the sufficient condition for the Dirac operator $H = H_0 + V$ (in dimensions $n \in \bbN$, $n \geq 2$), with $V$ an appropriate self-adjoint $N \times N$ matrix-valued potential, having no eigenvalues, as derived in \cite[Theorems~2.1, 2.3]{KOY15}. 

The following remark is primarily of a heuristic nature; at this point it just serves as a motivation for our detailed discussion of absence of threshold resonances for Schr\"odinger and Dirac operators in Section \ref{s3}. 
 
%%%%%%%%%%
\begin{remark} \lb{r5.1} 
The asymptotic behavior, for some $d_n \in (0,\infty)$, 
\begin{align}
\|G_0(0+i\,0;x,y)\|_{\cB(\bbC^N)} \underset{\substack{z \to 0, \\ z \in \ol{\bbC_+}\backslash\{0\}}}{=}  
d_n |x-y|^{1-n}, \quad x, y \in \bbR^n, \, x \neq y, \; n \in \bbN, \, n \geq 2, 
\end{align} 
combined with the homogeneous Lippmann--Schwinger-type integral equation in \eqref{9.37a}, formally ``implies'' the absence of zero-energy resonances (cf.\ Section \ref{s3} for a detailed discussion) of $H$ for $n\in \bbN$, $n\geq 3$, for sufficiently fast decaying short-range potentials $V$ at infinity, as $|\, \cdot \,|^{1-n}$ lies in 
$L^2(\bbR^n; d^n x)$ near infinity if and only if $n \geq 3$. This is consistent with observations in \cite{Ai16}, \cite[Sect.~4.4]{BE11}, \cite{BES08}, \cite{BGW95}, \cite{SU08}, \cite{SU08a}, \cite{ZG13} for $n=3$ (see also Remark \ref{r9.8}\,$(iii)$). 
This should be contrasted with the behavior of Schr\"odinger operators where 
\begin{align}
\begin{split}
\lim_{\substack{z \to 0 \\ z \in \bbC \backslash \{0\}}} g_0(z;x,y) = g_0(0;x,y) 
= \f{1}{(n-2) \omega_{n-1}} |x - y|^{2-n},& \\ 
 x, y \in\bbR^n, \, x \neq y, \; n \in \bbN, \; n \ge 3,&    
 \end{split} 
\end{align} 
``implies'' the absence of zero-energy resonances of $h= h_0 + v$ for $n\in \bbN$, $n\geq 5$, again for sufficiently fast decaying short-range potentials $v$ at infinity, as $|\, \cdot \,|^{2-n}$ lies in $L^2(\bbR^n; d^n x)$ near infinity if and only if $n \geq 5$, as observed in \cite{Je80}.

Since $H_0$ has no spectral gap, $\sigma(H_0) = \bbR$, but $h_0$ has the half-line 
$(-\infty, 0)$ in its resolvent set, a comparison of $h_0$ with the massive free 
Dirac operator $H_0(m) = H_0 + m \, \beta$, $m > 0$, with spectral gap $(- m, m)$, replacing the energy $z=0$ by $z=\pm m$, is quite natural and then exhibits a similar logarithmic blowup behavior as $z \to 0$
in dimensions $n=2$. Finally, the leading asymptotic behavior $|\, \cdot \,|^{2-n}$ of the free massive Green's matrix $G_0(m,\pm m;\dott,0)$ in \eqref{5.14} near infinity then ``implies'' the absence of threshold resonances of $H(m) = H_0(m) + V$ for $n\in \bbN$, $n\geq 5$, as in the case of Schr\"odinger operators $h$. 

Section \ref{s3} is devoted to a rigorous treatment of the formal statements in this remark. \hfill $\diamond$
\end{remark}
%%%%%%%%%%

Next, we also recall some basic facts on $L^p$-properties of Riesz potentials (see, e.g., 
\cite[Sect.~V.1]{St70}):
%%%%%%
\begin{theorem} \lb{t5.4A}
Let $n \in \bbN$, $\alpha \in (0,n)$, and introduce the Riesz potential operator $\cR_{\alpha,n}$ as follows: 
\begin{align}
\begin{split} 
& (\cR_{\alpha,n} f)(x) = \big((- \Delta)^{- \alpha/2} f\big)(x) 
= \gamma(\alpha,n)^{-1} \int_{\bbR^n} d^n y \, |x - y|^{\alpha - n} f(y),     \lb{5.50} \\
& \gamma(\alpha,n) = \pi^{n/2} 2^{\alpha} \Gamma(\alpha/2)/\Gamma((n-\alpha)/2), 
\end{split}
\end{align}
for appropriate functions $f$ $($see below\,$)$. \\[1mm] 
$(i)$ Let $p \in [1,\infty)$ and $f \in L^p(\bbR^n)$. Then the integral $(\cR_{\alpha,n} f)(x)$ converges for $($Lebesgue\,$)$ a.e.~$x \in \bbR^n$.   \\[1mm] 
$(ii)$ Let $1 < p < q < \infty$, $q^{-1} = p^{-1} - \alpha n^{-1}$, and $f \in L^p(\bbR^n)$. Then there 
exists $C_{p,q,\alpha,n} \in (0,\infty)$ such that
\begin{equation}
\|\cR_{\alpha,n} f\|_{L^q(\bbR^n)} 
\leq C_{p,q,\alpha,n} \|f\|_{L^p(\bbR^n)}.    \lb{5.51} 
\end{equation}
\end{theorem}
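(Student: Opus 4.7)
The plan is to recognize this as the classical Hardy–Littlewood–Sobolev inequality and follow the standard approach (cf.\ \cite{St70}). The two analytic ingredients are an elementary decomposition of the Riesz kernel $K(y) := |y|^{\alpha - n}$ and the $L^p$-boundedness of the Hardy–Littlewood maximal function $M f$, which I shall take as a known classical fact.

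For part (i), I would split the kernel at the unit ball as $K = K_1 + K_2$ with $K_1 := K \chi_{\{|y| \le 1\}}$ and $K_2 := K \chi_{\{|y| > 1\}}$. Because $\alpha \in (0, n)$, one has $K_1 \in L^1(\bbR^n)$, so Young's inequality shows that $(K_1 * |f|)(x) < \infty$ for a.e.\ $x$ whenever $f \in L^p(\bbR^n)$. For the tail piece one uses that $K_2 \in L^{p'}(\bbR^n)$ in the regime $p < n/\alpha$ (the regime in which the theorem has sharp content) together with H\"older's inequality; a short truncation argument handles the remaining exponents. Adding the two contributions yields the claimed a.e.\ finiteness.

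The substantive part of the theorem is the inequality in (ii). Its proof hinges on the pointwise bound
\begin{equation*}
|(\cR_{\alpha,n} f)(x)| \le C \, (M f)(x)^{1 - \alpha p / n} \, \|f\|_{L^p(\bbR^n)}^{\alpha p / n} \quad \text{for a.e.\ } x \in \bbR^n.
\end{equation*}
To obtain it I would split the defining integral as $\int_{|y| < r} + \int_{|y| \ge r}$ at a radius $r > 0$ to be chosen. The near part is controlled by dyadic decomposition into shells $\{2^{-k-1} r \le |y| < 2^{-k} r\}$ together with the elementary bound $\int_{|z| < s} |f(x - z)| \, d^n z \le C s^n (Mf)(x)$, yielding a contribution of order $r^\alpha (M f)(x)$. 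The far part is controlled by H\"older's inequality and the explicit evaluation $\|\chi_{\{|y| \ge r\}} K\|_{L^{p'}(\bbR^n)} = C\, r^{\alpha - n/p}$ (valid precisely when $p < n/\alpha$, equivalently $q < \infty$). Balancing the two contributions by choosing $r^{n/p} \sim \|f\|_{L^p} / (M f)(x)$ produces the displayed inequality.

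Raising this to the $q$-th power and integrating, the Sobolev exponent identity $q(1 - \alpha p / n) = p$ (a rewriting of $1/q = 1/p - \alpha/n$) reduces the claim to $\|\cR_{\alpha,n} f\|_{L^q}^q \le C \|f\|_{L^p}^{q - p} \|M f\|_{L^p}^p$, which closes up upon invoking the Hardy–Littlewood maximal theorem $\|M f\|_{L^p} \le C_p \|f\|_{L^p}$; this is where the hypothesis $p > 1$ enters essentially. The main obstacle I anticipate is tracking the interplay of the two exponent restrictions: $p > 1$ is required for the maximal inequality, while $q < \infty$ (equivalently $p < n/\alpha$) is required for the H\"older bound on the far part, and together these reproduce precisely the hypothesis $1 < p < q < \infty$. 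The numerical factor $\gamma(\alpha, n)^{-1}$ in the definition of $\cR_{\alpha, n}$ is immaterial for the inequality and can be absorbed into $C_{p, q, \alpha, n}$.
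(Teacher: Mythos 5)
The paper does not actually prove Theorem~\ref{t5.4A}: it is recorded as a recalled classical fact with the pointer ``see, e.g., \cite[Sect.~V.1]{St70}'' and no proof follows, so there is no in-paper argument to compare against. Your proposal is precisely the proof in that reference (Stein, Ch.~V, Thm.~1): derive the pointwise bound
\begin{equation*}
|(\cR_{\alpha,n}f)(x)| \le C\,(Mf)(x)^{1-\alpha p/n}\,\|f\|_{L^p(\bbR^n)}^{\alpha p/n}
\end{equation*}
by splitting the kernel at a scale $r$, using dyadic shells plus the maximal function on the near part, H\"older on the far part, and optimizing in $r$; then raise to the $q$-th power, integrate, and invoke the Hardy--Littlewood maximal theorem. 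Your exponent bookkeeping ($q(1-\alpha p/n)=p$ and $q\alpha p/n=q-p$) is correct, and you correctly locate where $p>1$ and $q<\infty$ are used. So the substantive part~$(ii)$ is fine, and it is the ``same'' proof in the only sense available here, namely the one behind the paper's citation.

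There is, however, a genuine gap in your treatment of part~$(i)$: the remark that ``a short truncation argument handles the remaining exponents'' ($p\ge n/\alpha$) cannot be filled in, because the a.e.\ convergence is simply false there. For example, with $p=n/\alpha$ (note $p>1$ since $\alpha<n$), the function $f(y)=(1+|y|)^{-\alpha}\bigl(\log(e+|y|)\bigr)^{-1}$ lies in $L^{p}(\bbR^n)$, yet for \emph{every} $x$ the tail of the defining integral is bounded below by a constant times $\int^\infty \rho^{-1}(\log\rho)^{-1}\,d\rho=\infty$. In \cite[Sect.~V.1]{St70} part~(a) is stated with the standing hypothesis $1/q=1/p-\alpha/n$ with $q<\infty$, i.e.\ $p\in[1,n/\alpha)$; the paper's phrasing ``$p\in[1,\infty)$'' in $(i)$ is an imprecision, and in fact the paper only ever applies $(i)$ under the hypotheses of $(ii)$, so nothing downstream is affected. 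But you should drop the claim that the unrestricted version can be rescued by truncation, and instead restrict $(i)$ to $p<n/\alpha$, where your $K=K_1+K_2$ argument (Young on $K_1*|f|$, H\"older on $K_2*|f|$) closes cleanly.
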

%%%%%%

We also note the $\beta$ function-type integral (cf.\ \cite[p.~118]{St70}),
\begin{align}
& \int_{\bbR^n} d^n y \, |e_k - y|^{\alpha - n} |y|^{\beta - n} 
= \gamma(\alpha,n) \gamma(\beta,n)/\gamma(\alpha + \beta,n),    \no \\
& 0 < \alpha < n, \; 0 < \beta < n, \; \alpha + \beta < n,    \lb{5.52} \\
& e_k =(0,\dots,\underbrace{1}_{k},\dots,0), \; 1 \leq k \leq n,    \no 
\end{align}
and more generally, the Riesz composition formula (see \cite[Sects.~3.1, 3.2]{Du70}),
\begin{align}
\begin{split}
\int_{\bbR^n} d^n y \, |x - y|^{\alpha - n} |y - w|^{\beta - n} 
= [\gamma(\alpha,n) \gamma(\beta,n)/\gamma(\alpha + \beta,n)] |x - w|^{\alpha + \beta -n},& \\
\quad 0 < \alpha < n, \; 0 < \beta < n, \; 0 < \alpha + \beta < n, \; x, w \in \bbR^n.    \lb{5.52A} 
\end{split} 
\end{align}

For later use in Section \ref{s3}, we recall the following estimate taken from \cite[Lemma~6.3]{EG10}.
%%%%%%%
\begin{lemma} \lb{l3.12}
Let $n\in \bbN$ and $x_1,x_2\in \bbR^n$.  If $k,\ell\in [0,n)$, $\varepsilon, \beta \in (0,\infty)$, with $k+\ell+\beta \geq n$, and $k+\ell\neq n$, then
\begin{align}
\begin{split}
&\int_{\bbR^n} d^ny \, |x_1 - y|^{-k} \langle y\rangle^{-\beta-\varepsilon} |y - x_2|^{-\ell}     \\
&\quad \leq C_{n,k,\ell,\beta,\varepsilon} \cdot
\begin{cases}
|x_1 - x_2|^{-\max\{0,k+\ell-n\}},& |x_1 - x_2|\leq 1,\\
|x_1 - x_2|^{-\min\{k,\ell,k+\ell+\beta-n\}},& |x_1 - x_2| \geq 1,
\end{cases}
\end{split} 
\end{align}
where $C_{n,k,\ell,\beta,\varepsilon}\in (0,\infty)$ is an $x_1, x_2$-independent constant.
\end{lemma}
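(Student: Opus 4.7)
Write $R := |x_1 - x_2|$ and decompose $\bbR^n = \Omega_1 \cup \Omega_2 \cup \Omega_3$ with
\begin{equation*}
\Omega_1 := \{y : |y - x_1| \le R/2\}, \quad \Omega_2 := \{y : |y - x_2| \le R/2\}, \quad \Omega_3 := \bbR^n \setminus (\Omega_1 \cup \Omega_2).
\end{equation*}
By the triangle inequality $|y - x_2| \ge R/2$ on $\Omega_1$ and $|y - x_1| \ge R/2$ on $\Omega_2$, while on $\Omega_3$ both distances exceed $R/2$ and are mutually comparable (with ratios bounded by $3$), so one may replace $|y - x_1|^{-k}|y - x_2|^{-\ell}$ by $C|y - x_1|^{-(k+\ell)}$ up to a universal constant. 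This reduces the problem to one-center Riesz estimates on each piece.

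\emph{Short distance case, $R \le 1$.} On $\Omega_1$, I would crudely dominate $\langle y\rangle^{-\beta-\varepsilon} \le 1$, extract $|y - x_2|^{-\ell} \lesssim R^{-\ell}$, and invoke $\int_{|y-x_1|\le R/2} |y - x_1|^{-k}\, d^n y \lesssim R^{n-k}$ (valid since $k < n$) to obtain $R^{n-k-\ell}$. This equals $R^{-(k+\ell-n)}$ when $k + \ell > n$ and is $\le 1$ when $k + \ell < n$ (using $R \le 1$), matching $R^{-\max\{0, k+\ell-n\}}$; the borderline $k + \ell = n$ is exactly the case excluded by hypothesis, where the present argument would only yield a logarithm. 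The region $\Omega_2$ is symmetric. On $\Omega_3$, the merged integrand $|y - x_1|^{-(k+\ell)}\langle y\rangle^{-\beta-\varepsilon}$ is split at $|y - x_1| = 1$: the inner annulus $R/2 \le |y - x_1| \le 1$ reduces to $\int_{R/2}^1 r^{n-1-k-\ell}\, dr$ with the same sign case-distinction, and the outer tail converges thanks to $k + \ell + \beta + \varepsilon > n$.

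\emph{Long distance case, $R \ge 1$.} The target is $R^{-\min\{k, \ell, k+\ell+\beta - n\}}$. On $\Omega_1$, extract $|y - x_2|^{-\ell} \lesssim R^{-\ell}$ and study $\int_{|y - x_1|\le R/2}|y - x_1|^{-k}\langle y\rangle^{-\beta-\varepsilon}\, d^n y$ via $z = y - x_1$, dyadically splitting $|z| \le 1$ versus $1 \le |z| \le R/2$: the inner piece is $O(1)$, while the outer piece is either $O(1)$ (if $k + \beta + \varepsilon > n$) or $O(R^{n - k - \beta - \varepsilon})$ (if $k + \beta + \varepsilon < n$), so the total $\Omega_1$ contribution is $\lesssim R^{-\ell}$ or $\lesssim R^{-(k+\ell+\beta+\varepsilon-n)} \le R^{-(k+\ell+\beta-n)}$. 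The mirror analysis on $\Omega_2$ yields $R^{-k}$ or $R^{-(k+\ell+\beta-n)}$. For $\Omega_3$, merge the two singular kernels and dyadically decompose in $|y|$ to isolate the annulus where one of $|y - x_1|$, $|y - x_2|$, $\langle y\rangle$ dominates, obtaining the tail bound $R^{-(k+\ell+\beta-n)}$ from $k + \ell + \beta + \varepsilon > n$.

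\emph{Main obstacle.} The principal difficulty is that the weight $\langle y\rangle^{-\beta-\varepsilon}$ is anchored at the origin while the Riesz factors are anchored at $x_1$ and $x_2$, so no single reference point is privileged and the bound must be uniform in the \emph{positions} of $x_1, x_2$, not merely in their separation $R$. Extracting the sharp exponent $k + \ell + \beta - n$ on $\Omega_3$ in the long-distance regime demands a dyadic decomposition that in each scale allows exactly one of the three factors to dominate the decay, and this is precisely where the borderline hypothesis $k + \ell + \beta \ge n$ is essential to guarantee integrability at infinity.
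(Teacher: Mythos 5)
The paper itself does not prove this lemma---it quotes it verbatim from [EG10, Lemma~6.3]---so there is no in-text argument to compare against, and I will assess your proposal on its own terms.

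Your decomposition into $\Omega_1$, $\Omega_2$, $\Omega_3$ is the natural one, and the short-distance regime $R\le 1$ is essentially correct: on $\Omega_1$, $\Omega_2$ the bound $R^{n-k-\ell}$ follows as you say, the excluded borderline $k+\ell=n$ is precisely where a logarithm would appear, and the tail on $\Omega_3$ does converge uniformly in $x_1,x_2$ from $k+\ell+\beta+\varepsilon>n$ (though establishing that uniformity already requires a secondary split in $|y|$ relative to $|x_1|$ which you leave implicit). The genuine gap is in your long-distance analysis of $\Omega_1$ (and, by symmetry, $\Omega_2$). After extracting $R^{-\ell}$ you need to bound
\begin{equation*}
\int_{|y-x_1|\le R/2}|y-x_1|^{-k}\langle y\rangle^{-\beta-\varepsilon}\,d^ny ,
\end{equation*}
and you claim the annulus $1\le|y-x_1|\le R/2$ contributes $O(1)$ or $O\big(R^{n-k-\beta-\varepsilon}\big)$ by reducing to $\int_1^{R/2}r^{\,n-1-k-\beta-\varepsilon}\,dr$. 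That reduction silently assumes $\langle y\rangle\gtrsim|y-x_1|$ on the annulus, which fails precisely when $x_1$ is far from the origin and $y$ is near it: there $\langle y\rangle\sim 1$ while $|y-x_1|\sim|x_1|$ may be comparable to $R$. The claimed estimate is in fact true---one sees it by further splitting on $\{|y|\le|x_1|/2\}$, where $|y-x_1|\gtrsim|x_1|$ and only the weight needs integrating, versus the complement, where $\langle y\rangle$ can legitimately be compared to $|y-x_1|$---but the dyadic argument you actually wrote does not establish it. Your own \emph{main obstacle} paragraph correctly diagnoses this anchor mismatch between $\langle y\rangle$ and the Riesz centers, yet you only bring that observation to bear on $\Omega_3$ and overlook that the same issue invalidates the radial reduction you use on $\Omega_1$ and $\Omega_2$ in the long-distance regime.
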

%%%%%%%

We conclude this section by recalling the following interesting results of McOwen \cite{Mc79} and Nirenberg--Walker \cite{NW73}, which provide necessary and sufficient conditions for the boundedness of certain classes of integral operators in $L^p(\bbR^n)$:

%%%%%%%
\begin{theorem} \lb{t5.6}
Let $n \in \bbN$, $c, d \in \bbR$, $c + d > 0$, $p \in (1,\infty)$, and $p'=p/(p-1)$.  The following items $(i)$ and $(ii)$ hold.\\
$(i)$  If
\begin{equation}
K_{c,d}(x,y) = |x|^{-c} |x - y|^{(c + d) - n} |y|^{-d}, 
\quad x, y \in \bbR^n, \; x \neq x',    \lb{5.21} 
\end{equation}
then the integral operator $K_{c,d}$ in $L^p(\bbR^n)$ with integral kernel 
$K_{c,d}(\, \cdot \,, \, \cdot \,)$ 
in \eqref{5.21} is bounded if and only if $c < n/p$ and $d < n/p'$.\\[1mm] 
$(ii)$ If
\begin{equation}
\wti K_{c,d}(x,y) = (1+|x|)^{-c} |x - y|^{(c + d) - n} (1+|y|)^{-d}, 
\quad x, y \in \bbR^n, \; x \neq x',    \lb{5.21mc}
\end{equation}
then the integral operator $\wti K_{c,d}$ in $L^p(\bbR^n)$ with integral kernel 
$\wti K_{c,d}(\, \cdot \,, \, \cdot \,)$ 
in \eqref{5.21mc} is bounded if and only if $c < n/p$ and $d < n/p'$.
\end{theorem}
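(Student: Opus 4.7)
The plan is to prove $(i)$ via Schur's test with a homogeneous weight $w(y)=|y|^s$ (the choice being forced by the dilation covariance $K_{c,d}(\lambda x,\lambda y)=\lambda^{-n}K_{c,d}(x,y)$), and then to handle $(ii)$ by a four-region case split. Write $T_{c,d}$ (resp.~$\wti T_{c,d}$) for the integral operator on $L^p(\bbR^n)$ with kernel $K_{c,d}$ (resp.~$\wti K_{c,d}$). I use Schur's test in the form: $\|T_{c,d}\|_{L^p\to L^p}\le A^{1/p'}B^{1/p}$ whenever some positive measurable $w$ satisfies $\int_{\bbR^n}K_{c,d}(x,y)w(y)^{p'}d^ny\le A\,w(x)^{p'}$ and $\int_{\bbR^n}K_{c,d}(x,y)w(x)^{p}d^nx\le B\,w(y)^{p}$.

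For sufficiency in $(i)$, substitute $w(y)=|y|^s$ and apply the Riesz composition formula \eqref{5.52A}: with $\alpha=c+d$ and $\beta=sp'-d+n$,
\[
\int_{\bbR^n}K_{c,d}(x,y)|y|^{sp'}\,d^n y=|x|^{-c}\int_{\bbR^n}|x-y|^{\alpha-n}|y|^{\beta-n}\,d^n y=C_1\,|x|^{sp'},
\]
valid when $0<\alpha,\beta<n$ and $\alpha+\beta<n$. Given $c+d>0$ this collapses to $d-n<sp'<-c$, and a symmetric calculation for the second Schur integral yields $c-n<sp<-d$. Eliminating $s$ shows these four constraints are simultaneously satisfiable precisely when $c<n/p$ and $d<n/p'$; moreover, these two inequalities already imply $c+d<n/p+n/p'=n$, so the side condition $c+d<n$ required by \eqref{5.52A} is automatic.

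For necessity in $(i)$, fix $y_0\in\bbR^n$ with $|y_0|=1$ and test with $f=\chi_{B(y_0,1/2)}\in L^p(\bbR^n)$. For $|x|\le 1/4$, uniformly over $y\in\supp f$ we have $|x-y|\in[1/4,7/4]$ and $|y|\in[1/2,3/2]$, so $(T_{c,d}f)(x)\ge c_0\,|x|^{-c}$ on $\{|x|\le 1/4\}$ for some $c_0>0$. Since $\int_{\{|x|\le 1/4\}}|x|^{-cp}\,d^n x=+\infty$ whenever $c\ge n/p$, this forces $T_{c,d}f\notin L^p(\bbR^n)$, contradicting boundedness. Necessity of $d<n/p'$ follows by duality, since the formal $L^{p'}$-adjoint of $T_{c,d}$ is $T_{d,c}$.

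Part $(ii)$ reduces to $(i)$ via the two-sided comparison $(1+|t|)^{-c}\asymp[\max(1,|t|)]^{-c}$, valid for every real $c$. Partition $\bbR^n_x\times\bbR^n_y$ into four regions according to $|x|\lessgtr 1$ and $|y|\lessgtr 1$: on $\{|x|>1,|y|>1\}$, $\wti K_{c,d}\asymp K_{c,d}$ and $(i)$ applies; on $\{|x|\le 1,|y|\le 1\}$, $\wti K_{c,d}$ reduces up to bounded factors to a convolution with $|z|^{c+d-n}\chi_{\{|z|\le 2\}}\in L^1(\bbR^n)$ (since $c+d>0$), so Young's inequality yields unconditional $L^p$-boundedness; on each mixed region, $|x-y|$ is comparable to the larger of $|x|,|y|$, and the operator takes the form $f\mapsto\chi_{\{|x|\le 1\}}\int_{\{|y|>1\}}|y|^{c-n}f(y)\,d^n y$ (or its transpose), whose $L^p$-boundedness is equivalent to $|y|^{c-n}\chi_{\{|y|>1\}}\in L^{p'}(\bbR^n)$, i.e., to $c<n/p$ (and symmetrically $d<n/p'$). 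Necessity in $(ii)$ is shown by testing $\wti T_{c,d}$ against $f=\chi_{B(0,1)}$: for large $|x|$ one computes $(\wti T_{c,d}f)(x)\asymp|x|^{d-n}$, which lies in $L^p(\bbR^n)$ iff $d<n/p'$; the condition $c<n/p$ then follows by duality, since $\wti T_{c,d}^{\ast}=\wti T_{d,c}$ on $L^{p'}$. The main obstacle I anticipate is the careful bookkeeping in Schur's test to show that the feasible range of $s$ collapses sharply to the stated thresholds $c<n/p$, $d<n/p'$, together with the piecewise analysis in $(ii)$, where the loss of dilation covariance precludes a single clean Schur argument.
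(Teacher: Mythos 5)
The paper does not prove Theorem~\ref{t5.6}: it simply quotes the result of Nirenberg--Walker~\cite{NW73} (for part~$(i)$) and McOwen~\cite{Mc79} (for part~$(ii)$). So your argument cannot be compared to a ``paper proof,'' only judged on its own merits.

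Your treatment of part~$(i)$ is correct and is the natural one: Schur's test with the dilation-covariant weight $|y|^{s}$, reduced to the Riesz composition formula~\eqref{5.52A}. The bookkeeping ($d-n<sp'<-c$, $c-n<sp<-d$, and the observation that a feasible $s$ exists exactly when $c<n/p$ and $d<n/p'$, with $c+d<n$ coming along for free) is all right, and the necessity argument (testing with $\chi_{B(y_0,1/2)}$, $|y_0|=1$, to produce a lower bound $\gtrsim|x|^{-c}$ near the origin, then dualizing via $T_{c,d}^{\ast}=T_{d,c}$) is also correct.

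In part~$(ii)$, however, there is a genuine gap in the mixed-region analysis. You assert that on $\{|x|\le 1,\ |y|>1\}$ one has $|x-y|\asymp|y|$, and you use this to replace $\wti K_{c,d}$ by the rank-one kernel $|y|^{c-n}\chi_{\{|x|\le 1\}}\chi_{\{|y|>1\}}$. This comparability fails near the corner $|x|=|y|=1$: taking $x=(1,0,\dots,0)$, $y=(1+\varepsilon,0,\dots,0)$ gives $|x-y|=\varepsilon\to 0$ while $|y|\to 1$. Since $c+d-n<0$ under the hypotheses, $|x-y|^{c+d-n}$ is large precisely where the asserted equivalence breaks, so the rank-one reduction does not control this part of the mixed region. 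The standard repair is one more split: on $\{|x|\le 1,\ 1<|y|\le 2\}$ all three factors are comparable to constants or to $|x-y|^{c+d-n}$ with $|x-y|\le 3$, so Young's inequality applies exactly as in your $\{|x|\le 1,|y|\le 1\}$ case; while on $\{|x|\le 1,\ |y|>2\}$ one does have $|x-y|\ge|y|-1\ge|y|/2$, so $|x-y|\asymp|y|$ and your rank-one argument is valid, giving the condition $c<n/p$. The transpose region is handled symmetrically, yielding $d<n/p'$. With this finer decomposition the proof of $(ii)$ goes through; the remaining pieces (the $\{|x|>1,|y|>1\}$ region dominated by $K_{c,d}$, the Young's inequality piece near the origin, and the necessity argument via $f=\chi_{B(0,1)}$ and duality) are correct as written.
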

%%%%%%%

%%%%%%%%%%%%%%%%%%%%%%%%%%%%%%
%%%%%%%%%%%%%%%%%%%%%%%%%%%%%%
\section{Nonexistence of Threshold Resonances} \lb{s3}
%%%%%%%%%%%%%%%%%%%%%%%%%%%%%%
%%%%%%%%%%%%%%%%%%%%%%%%%%%%%%

In this section we prove our principal results on nonexistence of threshold resonances in three cases: First, 
in the case of Schr\"odinger operators in dimension $n \geq 3$; second, in the case of massless Dirac 
operators in dimensions $n \geq 2$; and third, in the case of massive Dirac operators in dimensions 
$n \geq 3$. \\[1mm] 
\noindent 
{\bf Schr\"odinger Operators in $\boldsymbol{\bbR^n}$, $\boldsymbol{n \geq 3}$.} 
We begin with the case of Schr\"odinger operators $h$ in dimension $n \geq 3$ as defined 
in \eqref{2.14}, and start by making the following assumptions on the potential $v$.    

%%%%%%%
\begin{hypothesis} \lb{h9.5a}
Let $n \in \bbN$, $n \geq 3$. Assume the a.e.~real-valued potential 
$v$ satisfies for some fixed $C \in (0,\infty)$, 
\begin{equation}
v \in L^{\infty} (\bbR^n), \quad |v(x)| \leq C \langle x \rangle^{- 4} \, 
\text{ for a.e.~$x \in \bbR^n$.}    \lb{9.31Aa}
\end{equation}
In addition, we suppose that 
\begin{align} 
\begin{split}
& v = v_1 v_2 = |v|^{1/2} u_v |v|^{1/2},   \\
& \text{where } \, v_1 = |v|^{1/2}, \quad 
v_2 = u_v |v|^{1/2}, \quad u_v = \sgn(v). 
\end{split} 
\end{align} 
\end{hypothesis} 
%%%%%%%

Here we abbreviated 
\begin{equation}
\sgn(v(x)) = \begin{cases} 1, & v(x) \geq 0, \\
-1, & v(x) < 0, \end{cases}\, \text{ for a.e.~$x\in \bbR^n$.}
\end{equation}

We continue with the threshold behavior, that is, the $z=0$ behavior, of $h$: 

%%%%%%%
\begin{definition} \lb{d9.6a} Assume Hypothesis \ref{h9.5a}. \\[1mm] 
$(i)$ The point $0$ is called a zero-energy eigenvalue of $h$ if $h \psi = 0$ has a distributional solution 
$\psi$ satisfying 
\begin{equation} 
\psi \in \dom(h) = W^{2,2}(\bbR^n)    
\end{equation}
$($equivalently, $\ker(h) \supsetneqq \{0\}$$)$. \\[1mm] 
$(ii)$ The point $0$ is called a zero-energy $($or threshold\,$)$ resonance of $h$ if 
\begin{equation}
\ker\big(\big[I_{L^2(\bbR^n)} + \ol{v_2 (h_0 + 0 I_{L^2(\bbR^n)})^{-1} v_1}\big]\big) \supsetneqq \{0\}  
\end{equation}
and if there exists $0 \neq \phi \in \ker\big(\big[I_{L^2(\bbR^n)} + \ol{v_2 (h_0 + 0 I_{L^2(\bbR^n)}^{-1} v_1}\big]\big)$ such that $\psi$ defined by 
\begin{align}
\begin{split} 
& \psi(x) = - \big((h_0 + 0 I_{L^2(\bbR^n)})^{-1} v_1 \phi\big)(x)     \lb{9.46Aa} \\
& \hspace*{8.5mm} = [(n-2) \omega_{n-1}]^{-1} \int_{\bbR^n} d^n y \, |x-y|^{2-n} v_1(y) \phi(y)     
\end{split} 
\end{align}
$($for a.e.~$x \in \bbR^n$, $n \geq 3$$)$ is a distributional solution of $h \psi = 0$ satisfying 
\begin{equation} 
\psi \notin L^2(\bbR^n).
\end{equation}
$(iii)$ $0$ is called a regular point for $h$ if it is neither a zero-energy eigenvalue nor a zero-energy 
resonance of $h$.  
\end{definition}
%%%%%%%

Additional properties of $\psi$ are isolated in Theorem \ref{t9.7a}.

While the point $0$ being regular for $h$ is the generic situation, zero-energy eigenvalues and/or resonances are exceptional cases. 

Next, we introduce the following convenient abbreviation (for $x, y \in \bbR^n$, $x \neq y$):
\begin{align}
\begin{split} 
r_{0,0}(x-y) &= \lim_{\substack{z \to 0 \\ z \in \bbC \backslash \{0\}}} g(z;x,y) =
g_0(0;x,y) = [(n-2) \omega_{n-1}]^{-1} |x - y|^{2-n}      \\
&= 4^{-1} \pi^{- n/2} \Gamma((n-2)/2) |x - y|^{2-n}, \quad n \geq 3,   \lb{9.46a} 
\end{split} 
\end{align}
and note that
\begin{align}
- \Delta_x r_{0,0}(x-y) = - \Delta_x g_{0}(0;x,y) = \delta(x-y),  \quad 
x, y \in \bbR^n, \; x \neq y, \; n \geq 3,    &   \lb{3.9A} 
\end{align}
in the sense of distributions. 

In the remainder of this section we will frequently apply 
\cite[Prop.~6.10]{Fo99}, that is, the fact that $0 < p < q < r \leq \infty$ implies $L^p \cap L^r \subset L^q$. 

%%%%%%%
\begin{theorem} \lb{t9.7a}
Assume Hypothesis \ref{h9.5a}.  \\[1mm] 
$(i)$ If $n=3,4$, there are precisely four possible cases: \\[1mm]
Case $(I)$: $0$ is regular for $h$. \\[1mm]
Case $(II)$: $0$ is a $($necessarily simple\footnote{One can show that if $n=3,4$, there is at most one 
resonance function in Case $(II)$, see \cite{AGH82}, \cite{Je84}, \cite{JK79}.}\,$)$ resonance of $h$. 
In this case, the resonance function $\psi$ satisfies 
\begin{align} 
& \psi \in L^q(\bbR^3), \quad q \in (3, \infty) \cup\{\infty\},   \\
&  \psi \in L^q(\bbR^4), \quad q \in (2, \infty) \cup\{\infty\},   \\
& \nabla \psi \in \big[L^2(\bbR^n)\big]^n, \quad \Delta \psi \in L^q(\bbR^n), \; q \in [2,\infty) \cup \{\infty\}, \; n =3,4,   \\
&  \psi \notin L^2(\bbR^n), \quad n = 3,4.
\end{align}
Case $(III)$: $0$ is a $($possibly degenerate\,$)$ eigenvalue of $h$. In this case, the corresponding 
eigenfunctions $\psi \in \dom(h) = W^{2,2}(\bbR^n)$, $n \in \bbN$, $n \geq 3$, of $h \psi = 0$ also satisfy 
\begin{align} 
& \psi, \Delta \psi \in L^q(\bbR^n), \quad q \in [2, \infty) \cup\{\infty\}, \;  n =3,4,   \\
& \nabla \psi \in \big[L^2(\bbR^n)\big]^n, \;  n =3,4.   
\end{align}
Case $(IV)$: A possible mixture of Cases $(II)$ and $(III)$. \\[1mm]
$(ii)$ If $n \in \bbN$, $n \geq 5$, there are precisely two possible cases: \\[1mm]
Case $(I)$: $0$ is regular for $h$. \\[1mm]
Case $(II)$: $0$ is a $($possibly degenerate\,$)$ eigenvalue of $h$. In this case, the corresponding eigenfunctions $\psi \in \dom(h) = W^{2,2}(\bbR^n)$ of $h \psi = 0$ also satisfy
\begin{align} 
& \psi, \, \Delta \psi \in L^q(\bbR^n), \quad q \in \begin{cases} (n/(n-2), \infty) \cup \{\infty\}, & 5 \leq n \leq 7, \\
(2n/(n+4), 2n/(n-4)), & n \geq 8, 
\end{cases}     \lb{3.16A} \\ 
& \nabla \psi \in \big[L^q(\bbR^n)\big]^n, \quad q \in \begin{cases} (5/3,\infty), & n = 5, \\
(3/2,\infty), & n=6, \\
(7/5,14), & n=7, \\
(2n/(n+4),2n/(n-6)), & n \geq 8.
\end{cases}      \lb{3.17A} 
\end{align}
In particular, there are no zero-energy resonances of $h$ in dimension $n \geq 5$. \\[1mm] 
$(iii)$ The point $0$ is regular for $h$ if and only if 
\begin{equation}
\ker\big(\big[I_{L^2(\bbR^n)} + \ol{v_2 (h_0 + 0 I_{L^2(\bbR^n)})^{-1} v_1}\big]\big) = \{0\}. 
\end{equation}
\end{theorem}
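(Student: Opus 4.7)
The backbone of the proof is the Lippmann-Schwinger identity \eqref{9.46Aa}, which writes any candidate resonance/eigenfunction as $\psi = -c_n\cR_{2,n}(v_1\phi)$ with $\phi \in L^2(\bbR^n)$ and $c_n = [(n-2)\omega_{n-1}]^{-1}$, or equivalently, after substituting $\phi = v_2\psi$, as $\psi = -c_n\cR_{2,n}(v\psi)$. The strategy is to iterate these representations using the Hardy-Littlewood-Sobolev bound of Theorem \ref{t5.4A}$(ii)$ together with the decay $|v(x)|\leq C\langle x\rangle^{-4}$. H\"older against $|v_1(y)|\leq C\langle y\rangle^{-2}$ first produces $v_1\phi \in L^s$ for each $s\in (2n/(n+4),2]$, whence Theorem \ref{t5.4A}$(ii)$ gives $\psi \in L^q$ with $1/q = 1/s - 2/n$ throughout the admissible range; substituting $\phi = v_2\psi$ and iterating once more with the full weight $\langle y\rangle^{-4}$ collects the exponents listed in \eqref{3.16A}. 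Since $\Delta\psi = v\psi$ distributionally and $v \in L^\infty$, the same exponents transfer to $\Delta\psi$; differentiating the representation and invoking the $L^p$-boundedness of the Riesz transforms together with Theorem \ref{t5.4A}$(ii)$ for $\alpha=1$ produces \eqref{3.17A}. The $L^\infty$ endpoints in dimensions $n=3,4$ come from a direct Cauchy-Schwarz estimate against the $\langle y\rangle^{-2}$ weight once $\psi$ has been shown to lie in sufficiently many $L^p$-spaces.

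The decisive step for part $(ii)$ is a pointwise decay estimate. After the bootstrap has placed $\psi$ in $L^\infty(\bbR^n)$, I would insert $|v(y)\psi(y)|\leq C\langle y\rangle^{-4}$ into $|\psi(x)|\leq c_n\int_{\bbR^n} |x-y|^{2-n}|v(y)||\psi(y)|\,d^n y$ and split the integration into the regions $|y|\leq |x|/2$ and $|y|>|x|/2$, exploiting uniform bounds on $|x-y|$ on the first region and a Riesz-composition calculation on the second. This yields polynomial decay $|\psi(x)|\lesssim \langle x\rangle^{-\gamma_0}$ for some $\gamma_0 > 0$; feeding the improved decay back into the same bound a finite number of times (the number depending only on $n$) produces
\[
|\psi(x)| \leq C\langle x\rangle^{2-n}, \quad x \in \bbR^n, \; n \geq 5.
\]
Since $2(n-2) > n$ for $n \geq 5$, the right-hand side lies in $L^2(\bbR^n)$; hence $\psi \in L^2(\bbR^n)$, and combined with $\Delta\psi = v\psi \in L^2(\bbR^n)$, elliptic regularity upgrades $\psi$ to $W^{2,2}(\bbR^n) = \dom(h)$. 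This contradicts the defining condition $\psi\notin L^2(\bbR^n)$ of a threshold resonance in Definition \ref{d9.6a}$(ii)$, establishing the absence of zero-energy resonances in dimension $n \geq 5$.

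In dimensions $n=3,4$ the same pointwise decay $|\psi(x)|\lesssim \langle x\rangle^{2-n}$ places $\psi$ in $L^q$ for $q > n/(n-2)$ (yielding the ranges in Case $(II)$ of part $(i)$) but just fails $L^2$; genuine $W^{2,2}$ solutions furnish Case $(III)$, and a direct sum accounts for mixed Case $(IV)$. Part $(iii)$ follows by checking that \eqref{9.46Aa} sets up a bijection between nonzero $\phi\in \ker\big(\big[I_{L^2(\bbR^n)} + \overline{v_2(h_0 + 0 I_{L^2(\bbR^n)})^{-1}v_1}\big]\big)$ and zero-energy resonances or eigenfunctions of $h$: given $\phi$, the constructed $\psi$ is nonzero (otherwise $h_0\psi = -v_1\phi$ forces $v_1\phi=0$ and hence $\phi=0$), and conversely $\phi := v_2\psi$ recovers a nonzero kernel element. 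The main technical obstacle I anticipate is the dimension-sensitive book-keeping of endpoint exponents in \eqref{3.16A}-\eqref{3.17A}, in particular the borderline cases where Theorem \ref{t5.4A}$(ii)$ fails and must be complemented by the sharp weighted criterion of Theorem \ref{t5.6}.
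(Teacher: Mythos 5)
Your overall architecture (Lippmann--Schwinger representation, Riesz potentials, weighted kernels, bootstrap) matches the paper's toolbox, and your treatment of parts~$(i)$ and~$(iii)$ is in the right spirit. However, the decisive step in part~$(ii)$ --- your route to ``no resonances for $n\geq 5$'' --- diverges from the paper's argument and contains a genuine gap.

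You propose to first bootstrap $\psi$ into $L^\infty(\bbR^n)$ for \emph{all} $n\geq 5$, and then run a regional iteration (splitting $\{|y|\leq |x|/2\}$ vs.~$\{|y|>|x|/2\}$) to arrive at the pointwise bound $|\psi(x)|\lesssim\langle x\rangle^{2-n}$, from which $\psi\in L^2(\bbR^n)$ follows. The problem is the premise: the $L^\infty$ bootstrap fails for $n\geq 8$ under the decay assumption $|v(x)|\leq C\langle x\rangle^{-4}$. The paper only establishes $\psi\in L^\infty(\bbR^n)$ for $3\leq n\leq 7$, and Remark~\ref{rA.1} (Appendix~\ref{sA}) explains precisely why the H\"older/Lemma~\ref{l3.12} scheme breaks at $n=8$: the exponent $q'$ of the dual index needed to make $|x-y|^{(2-n)q'}$ locally integrable (namely $q'<n/(n-2)$) is incompatible with the available range $q'\in(2n/(n+4),2n/(n-4))$ once $n\geq 8$, since $n/(n-2)<2n/(n+4)$ there. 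So the step ``after the bootstrap has placed $\psi$ in $L^\infty$'' is unjustified in exactly the dimensions where you most need it. Your subsequent decay iteration is plausible in outline but is conditional on that unestablished $L^\infty$ bound and is itself only sketched.

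The paper avoids all of this by a much more direct argument: it applies the sharp weighted boundedness criterion of Theorem~\ref{t5.6}\,$(ii)$ (McOwen/Nirenberg--Walker, kernel $(1+|y|)^{-d}|x-y|^{(c+d)-n}$ with $c=0$, $d=2$, $p=p'=2$, and the decisive condition $d<n/p'$, i.e.\ $2<n/2$, i.e.\ $n\geq 5$) directly to the estimate $|\psi_0(x)|\leq d_n\int|x-y|^{2-n}\langle y\rangle^{-2}|\phi_0(y)|\,d^n y$ with $\phi_0\in L^2(\bbR^n)$, immediately yielding $\psi_0\in L^2(\bbR^n)$ and hence, via $\Delta\psi_0=v\psi_0\in L^2$ and $\nabla\psi_0\in[L^2]^n$, membership in $W^{2,2}(\bbR^n)=\dom(h)$. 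That single weighted $L^2$ bound carries all the weight of part~$(ii)$; the Riesz potential (HLS) estimates, H\"older with Lemma~\ref{l3.12}, and the $L^\infty$ discussion are then used only to flesh out the additional $L^q$ ranges in \eqref{3.16A}--\eqref{3.17A}, not to rule out resonances. You mention Theorem~\ref{t5.6} at the very end as a patch for ``borderline cases,'' but in fact it is the backbone of part~$(ii)$, and using it from the start removes both the $L^\infty$ dependence and the unverified pointwise-decay iteration from the argument.

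A smaller point on part~$(iii)$: to pass from $0\neq\psi_0\in\ker(h)$ to a nonzero Birman--Schwinger null vector, it is not enough to observe that the map $\phi\mapsto\psi$ is injective; one has to show $\psi_0+(h_0+0I)^{-1}v_1\phi_0$ actually vanishes. The paper does this by noting this function is harmonic, lies in $L^2(\bbR^n)$ (again via the Theorem~\ref{t5.6} estimate), and hence in $W^{2,2}(\bbR^n)$, so it must be $0$ because $\ker(h_0)=\{0\}$. Your sketch skips this harmonicity-plus-integrability step, which is where the actual work is.
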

%%%%%%%
\begin{proof}
Since $g_0(0;x,y)$, $x\neq y$, exists for all $n \geq 3$ (cf.\ \eqref{5.3}), the 
Birman--Schwinger eigenvalue equation  
\begin{equation} 
\big[I_{L^2(\bbR^n)} + \ol{v_2 (h_0 - 0 I_{L^2(\bbR^n)})^{-1} v_1}\big] \phi_0 = 0,  \quad 
0 \neq \phi_0 \in L^2(\bbR^n),   \lb{9.36A} 
\end{equation}
gives rise to a distributional zero-energy solution $\psi_0 \in L^1_{\loc}(\bbR^n)$ of $h \psi_0 = 0$ in 
terms of $\phi_0$ of the form (for a.e.~$x \in \bbR^n$, $n \geq 3$),
\begin{align}
& \psi_0(x) = - \big((h_0 - 0 I_{L^2(\bbR^n)})^{-1} v_1 \phi_0\big)(x)     \no \\
& \hspace*{8.5mm} = - [r_{0,0} * (v_1 \phi_0)](x)    \lb{9.36aA} \\
& \hspace*{8.5mm} = - 4^{-1} \pi^{- n/2} \Gamma((n-2)/2) \int_{\bbR^n} d^n y \, |x-y|^{2-n} 
v_1(y) \phi_0(y),   \lb{9.37A} \\
& \phi_0(x) = (v_2 \psi_0)(x).       \lb{9.38A}
\end{align}
In particular, one concludes that $\psi_0 \neq 0$. 

We note from the outset that $\Delta \psi = v \psi$ with $v \in L^{\infty}(\bbR^n)$ yields 
$\Delta \psi \in L^r(\bbR^n)$ whenever $\psi \in L^r(\bbR^n)$ for some $r \geq 1$. Moreover, 
since also $\Delta \psi = v_1 \phi$, with $v_1 \in L^{\infty}(\bbR^n)$ and $\phi \in L^2(\bbR^n)$, this  
yields $\Delta \psi \in L^2(\bbR^n)$, and, if in addition $\psi$ is an eigenfunction of $H$, then of course 
$\psi \in W^{1,2}(\bbR^n)$. 

Thus, one estimates, 
with $|v_1(y)| \leq c \langle y \rangle^{-2}$ and some constant $d_n \in (0, \infty)$, 
\begin{align}
|\psi_0(x)| &\leq d_n \int_{\bbR^n} d^n y \, |x - y|^{2-n} \langle y \rangle^{-2} |\phi_0(y)|  \no \\
& = d_n \cR_{2,n}\big(\langle \dott \rangle^{-2} |\phi_0(\dott)|\big)(x)   \lb{9.41A} \\ 
& = d_n \cR_{2,n}\big(\langle \dott \rangle^{-2} |v_2(\dott)| |\psi_0(\dott)|\big)(x),   \lb{9.41C} 
\quad x \in \bbR^n.  
\end{align} 
An application of Theorem \ref{t5.6}\,$(ii)$ with $c=0$, $d=2$, $p=p'=2$, and the inequality $2 < n/2$, combined with 
$\phi_0 \in L^2(\bbR^n)$, then yield 
\begin{equation}
 \psi_0 \in L^2(\bbR^n), \quad n \geq 5.     \lb{9.42A} 
\end{equation} 
To prove that actually $\psi_0 \in \dom(h) = W^{2,2}(\bbR^n)$, it suffices to argue as follows:  
\begin{align}
\Delta \psi_0 = v \psi_0 \in L^2(\bbR^n), \quad n \geq 5,     \lb{9.43A}
\end{align}     
in the sense of distributions since $v \in L^{\infty} (\bbR^n)$ and $\psi_0 \in L^2(\bbR^n)$. Thus, 
also $\nabla \psi_0 \in [L^2(\bbR^n)]^n$, and hence, 
\begin{equation}
\psi_0 \in W^{2,2}(\bbR^n), \quad n \geq 5.     \lb{9.45A}
\end{equation}

Moreover, employing \eqref{9.36aA}, one obtains for $n \geq 3$ that 
\begin{equation}
- (\Delta \psi_0)(x) = \Delta_x (r_{0,0} * v_1 \phi_0)(x) = - v_1(x) \phi_0(x), 
\end{equation}
and hence
\begin{equation}
(\Delta \psi_0) \in L^2(\bbR^n), \quad n \geq 3, 
\end{equation}
since $v_1 \in L^{\infty}(\bbR^n)$ and $\phi_0 \in L^2(\bbR^n)$. In addition, \eqref{9.38A} yields for 
a.e.~$x \in \bbR^n$, $n \geq 3$,
\begin{equation}\lb{Zz3.31}
(\nabla \psi_0)(x) = 4^{-1} \pi^{- n/2} (2-n) \Gamma((n-2)/2) \int_{\bbR^n} d^n y \, |x-y|^{1-n} \f{(x-y)}{|x-y|}
v_1(y) \phi_0(y), 
\end{equation}
implying
\begin{align}
|(\nabla \psi_0)(x)| &\leq \wti c_n  \int_{\bbR^n} d^n y \, |x-y|^{1-n} |v_1(y)| |\phi_0(y)|      \no \\
& \leq \widehat c_n \int_{\bbR^n} d^n y \, |x-y|^{1-n} \langle y \rangle^{-2} |\phi_0(y)|    \lb{3.31A}
\end{align}
for some $\wti c_n, \widehat c_n \in (0,\infty)$. An application of Theorem \ref{t5.6}\,$(ii)$ with $c=0$, $d=1$, 
$p=p'=2$, and the condition $ 1 < n/2$ then implies 
\begin{equation}
(\nabla \psi_0) \in \big[L^2(\bbR^n)\big]^n, \quad n \geq 3. 
\end{equation}

Returning to the estimate \eqref{9.41A}, and invoking the Riesz potential $\cR_{2,n}$ 
(cf.\ Theorem \ref{t5.4A}), 
one obtains (for some constant $\wti d_n \in (0,\infty)$)
\begin{align}
|\psi_0(x)| & \leq d_n \int_{\bbR^n} d^n y \, |x - y|^{2-n} \langle y \rangle^{-2} |\phi_0(y)|  \no \\
& \leq \wti d_n \cR_{2,n} \big(\langle y \rangle^{-2} |\phi_0(\dott)|\big)(x),  \quad x \in \bbR^n, 
\end{align}
and hence \eqref{5.51} implies (for some constant $\wti C_{p,q,n} \in (0,\infty)$)  
\begin{align}
\|\psi_0\|_{L^q(\bbR^n)} & \leq \wti d_n \big\|\cR_{2,n} \big(\langle y \rangle^{-2} |\phi_0(\dott)|\big)
\big\|_{L^q(\bbR^n)}    \no \\
& \leq \wti C_{p,q,n} \big\|\langle y \rangle^{-2} |\phi_0(\dott)|\big\|_{L^p(\bbR^n)}   \no \\
& \leq \wti C_{p,q,n} \big\|\langle y \rangle^{-2}\big\|_{L^s(\bbR^n)} \|\phi_0\|\|_{L^2(\bbR^n)},   \lb{9.61A} \\
& \hspace*{-2.75cm} 1 < p < q < \infty, \; p^{-1} = q^{-1} + 2n^{-1}, \; s = 2qn [2n + 4q - qn]^{-1} \geq 1.  \no
\end{align}
In particular, 
\begin{equation}
p = qn/(n + 2q), \quad 2n + 4q - qn > 0. 
\end{equation}

\noindent 
$(a)$ The case $n=3$: Then $p = 3q/(3 + 2q)$ and $p >1$ requires $q \in (3,\infty)$, hence  
$p \in (1,3/2)$, and $s = 6q/(q + 6) > 2$. Thus, \eqref{9.61A} and 
$\big\|\langle \dott \rangle^{-2}\big\|_{L^s(\bbR^3)} < \infty$ 
imply
\begin{equation}
\psi_0 \in L^q(\bbR^3), \quad q \in (3,\infty). 
\end{equation} 

To prove that a resonance function (resp., eigenfunction) satisfies $\psi\in L^{\infty}(\bbR^3)$, one applies \eqref{9.38A} and the fact that $|v_1(\dott)|\leq c\langle\dott\rangle^{-2}$ for some constant $c\in (0,\infty)$ in \eqref{9.37A} to obtain
\begin{equation}\lb{3.38c}
|\psi(x)|\leq \widetilde{d}_3\int_{\bbR^3}d^3y\, |x-y|^{-1}\langle y\rangle^{-4}|\psi(y)|,\quad x\in \bbR^3,
\end{equation}
for an appropriate $x$-independent constant $\widetilde{d}_3\in (0,\infty)$.  By H\"older's inequality (with conjugate exponents $p=5/4$ and $p'=5$),
\begin{align}
|\psi(x)|\leq \widetilde{d}_3\bigg(\int_{\bbR^3}d^3y\, |x-y|^{-5/4}\langle y\rangle^{-5}\bigg)^{4/5}\bigg(\int_{\bbR^3}d^3y\, |\psi(y)|^5\bigg)^{1/5},\quad x\in \bbR^3.\lb{3.39c}
\end{align}
The second integral on the right-hand side in \eqref{3.39c} is finite since a resonance function (resp., eigenfunction) satisfies $\psi\in L^5(\bbR^3)$.  An application of Lemma \ref{l3.12} with $x_1 = x$, $k=5/4$, $\ell=0$, $\beta=4$, and $\varepsilon =1$ then implies
\begin{equation}
|\psi(x)|\leq \widetilde{d}_3 C_{3,5/4,0,4,1}^{4/5}\|\psi\|_{L^5(\bbR^3)},\quad x\in \bbR^3,
\end{equation}
and hence it follows that $\psi\in L^{\infty}(\bbR^3)$. 

\noindent 
$(b)$ The case $n=4$: Then $p = 4q/(4 + 2q)$ and $p >1$ requires $q \in (2,\infty)$, hence  
$p \in (1,2)$, and $s = q > 2$. Thus, \eqref{9.61A} and $\big\|(1 + |\dott|)^{-2}\big\|_{L^s(\bbR^4)} < \infty$ 
implies
\begin{equation}
\psi_0 \in L^q(\bbR^4), \quad q \in (2,\infty). 
\end{equation} 

To prove that a resonance function (resp., eigenfunction) satisfies $\psi\in L^{\infty}(\bbR^4)$, one applies \eqref{9.38A} and the fact that $|v_1(\dott)|\leq c\langle\dott\rangle^{-2}$ for some constant $c\in (0,\infty)$ in \eqref{9.37A} to obtain
\begin{equation}\lb{3.42c}
|\psi(x)|\leq \widetilde{d}_4\int_{\bbR^4}d^4y\, |x-y|^{-2}\langle y\rangle^{-4}|\psi(y)|,\quad x\in \bbR^4,
\end{equation}
for an appropriate $x$-independent constant $\widetilde{d}_4\in (0,\infty)$.  By H\"older's inequality (with conjugate exponents $p=3/2$ and $p'=3$),
\begin{align}
|\psi(x)|\leq \widetilde{d}_4\bigg(\int_{\bbR^4}d^4y\, |x-y|^{-3}\langle y\rangle^{-6}\bigg)^{2/3}\bigg(\int_{\bbR^4}d^4y\, |\psi(y)|^3\bigg)^{1/3},\quad x\in \bbR^4.\lb{3.43c}
\end{align}
The second integral on the right-hand side in \eqref{3.43c} is finite since a resonance function (resp., eigenfunction) satisfies $\psi\in L^3(\bbR^4)$.  An application of Lemma \ref{l3.12} with $x_1=x$, $k=3$, $\ell=0$, 
$\beta=5$, and $\varepsilon = 1$ then implies
\begin{equation}
|\psi(x)|\leq \widetilde{d}_4 C_{4,3,0,5,1}^{2/3}\|\psi\|_{L^3(\bbR^4)},\quad x\in \bbR^4,
\end{equation}
and it follows that $\psi\in L^{\infty}(\bbR^4)$. 

\noindent 
$(c)$ The case $n \geq 5$: Returning to \eqref{9.37A}, we employ the fact $\phi_0 = v_2 \psi_0$ and hence 
obtain for some constants $\wti D_n, D_n \in (0,\infty)$, 
\begin{align}
|\psi_0(x)| &\leq \wti D_n \int_{\bbR^n} d^n y \, |x-y|^{2-n} \langle y \rangle^{-4} |\psi_0(y)|    \no \\
&= D_n \cR_{2,n} \big(\langle \dott \rangle^{-4} |\psi_0(\dott)|\big)(x),\quad x\in\bbR^n,
\end{align}
and hence by Theorem \ref{t5.4A}\,$(ii)$  
\begin{align}
\|\psi_0\|_{L^q(\bbR^n)} 
&\leq D_n \big\|\cR_{2,n} \big(\langle \dott \rangle^{-4} |\psi_0(\dott)|\big)\big\|_{L^q(\bbR^n)}  \no \\
& \leq \widehat C_{p,q,n} \big\|\langle \dott \rangle^{-4} |\psi_0(\dott)|\big\|_{L^p(\bbR^n)}    \no \\
& \leq \widehat C_{p,q,n} \big\|\langle \dott \rangle^{-4}\big\|_{L^s(\bbR^n)} \|\psi_0\|_{L^2(\bbR^n)}
\end{align}
for some $\widehat C_{p,q,n} \in (0,\infty)$. Then $p = qn/(n + 2q) > 1$ yields $q > n/(n - 2)$. Since 
$s = 2qn/(2n + 4q - qn)$ and one needs $s \geq 1$ and hence $2n + 4q - qn > 0$, this implies $q < 2n/(n - 4)$. Thus, 
$q \in (n/(n-2), 2n/(n-4))$ and hence $s > 2$. The condition 
$\big\|\langle \dott \rangle^{-4}\big\|_{L^s(\bbR^n)} < \infty$ then implies $q > 2n/(n+4)$. Altogether one obtains 
\begin{equation}
q \in \begin{cases} (n/(n-2), 2n/(n-4)), & 5 \leq n \leq 7, \\
(2n/(n+4), 2n/(n-4)), & n \geq 8, 
\end{cases}
\end{equation} 
and thus, 
\begin{equation}
\psi_0 \in L^q(\bbR^n), \quad q \in \begin{cases} (n/(n-2), 2n/(n-4)), & 5 \leq n \leq 7, \\
(2n/(n+4), 2n/(n-4)), & n \geq 8.   \lb{3.49A} 
\end{cases}
\end{equation}
Since $\Delta \psi_0 = v \psi_0$ and $v \in L^{\infty}(\bbR^n)$, \eqref{3.49A} also implies
\begin{equation}
\Delta \psi_0 \in L^q(\bbR^n), \quad q \in \begin{cases} (n/(n-2), 2n/(n-4)), & 5 \leq n \leq 7, \\
(2n/(n+4), 2n/(n-4)), & n \geq 8, 
\end{cases}
\end{equation}
and hence \eqref{3.16A}.

Employing $\phi_0 = v_2 \psi_0$ in \eqref{3.31A} yields 
\begin{align}
|(\nabla \psi_0)(x)| & \leq \wti C_n \int_{\bbR^n} d^n y \, |x-y|^{1-n} \langle y \rangle^{-4} |\psi_0(y)|  \no \\
& = C_n \cR_{1,n}\big(\langle \dott \rangle^{-4} |\psi_0(\dott)|\big)(x), \quad x\in\bbR^n,    \lb{3.51A} 
\end{align}
for some $\wti C_n, C_n \in (0,\infty)$. Thus, Theorem \ref{t5.4A}\,$(ii)$ yields
\begin{equation}
|\nabla \psi_0| \in L^r(\bbR^n), \quad r = qn/(n-q), \; q < n, 
\end{equation}
with $q$ given as in \eqref{3.49A}. Working out the details yields 
\begin{equation}
|\nabla \psi_0| \in L^r(\bbR^n), \quad r \in \begin{cases} (5/2,\infty), & n=5, \\
(2,\infty), & n=6, \\
(7/4,14), & n=7, \\
(2n/(n-2), 2n/(n-6)), & n \geq 8.
\end{cases}     \lb{3.53A}
\end{equation}
On the other hand, applying Theorem \ref{t5.6}\,$(ii)$ to the first line in \eqref{3.51A} with $c=0$, $d=1$, 
$1 < n/q' = n(q-1)/q$, and hence $q > n/(n-1)$ yields 
 \begin{equation}
|\nabla \psi_0| \in L^q(\bbR^n), \quad  q \in \begin{cases} (n/(n-2), 2n/(n-4)), & 5 \leq n \leq 7, \\
(2n/(n+4), 2n/(n-4)), & n \geq 8. 
\end{cases}     \lb{3.54A} 
\end{equation}
A comparison of \eqref{3.53A} and \eqref{3.54A} implies \eqref{3.17A}. 

To prove essential boundedness of eigenfunctions $\psi$ in dimensions $n \in \{5, 6, 7\}$, one notes that a combination of \eqref{9.37A}, \eqref{9.38A}, and the fact that $|v_1(\dott)|\leq c\langle \dott\rangle^{-2}$ for some constant $c\in (0,\infty)$, yields
\begin{align}\lb{3.54c}
|\psi(x)|\leq \widetilde{d}_n\int_{\bbR^n}d^ny\, |x-y|^{2-n}\langle y\rangle^{-4}|\psi(y)|,\quad x\in \bbR^n,\, n\geq 5,
\end{align}
for an appropriate $x$-independent constant $\widetilde{d}_n\in (0,\infty)$.  To prove that the right-hand side in \eqref{3.54c} is essentially bounded, we make use of \eqref{3.49A} and consider the cases $n\in \{5,6,7\}$ next.
By \eqref{3.49A}, $\psi\in L^q(\bbR^n)$ for all
\begin{equation}\lb{3.55c}
q\in (n/(n-2),2n/(n-4)).
\end{equation}
Note that $q$ satisfies \eqref{3.55c} if and only if its conjugate exponent $q'=q/(q-1)$ satisfies
\begin{equation}\lb{3.56c}
q' \in (2n/(n+4),n/2).
\end{equation}
Choose $q_0' \in(2n/(n+4),n/(n-2))\subset (2n/(n+4),n/2)$.  Then $q_0:=q_0'/(q_0'-1)\in (n/(n-2),2n/(n-4))$ and $\psi\in L^{q_0}(\bbR^n)$.  Applying H\"older's inequality with conjugate exponents $q_0'$ and $q_0$ on the right-hand side of \eqref{3.54c}, one obtains
\begin{align}
|\psi(x)|\leq \widetilde{d}_n \bigg(\int_{\bbR^n}d^ny\, |x-y|^{(2-n)q_0'}\langle y\rangle^{-4q_0'} 
\bigg)^{1/q_0'}\bigg(\int_{\bbR^n}d^ny\,|\psi(y)|^{q_0} \bigg)^{1/q_0},\quad x\in \bbR^n.\lb{3.57c}
\end{align}
The second integral on the right-hand side in \eqref{3.57c} if finite since $\psi\in L^{q_0}(\bbR^n)$.  To estimate the first integral on the right-hand side in \eqref{3.57c}, one applies Lemma \ref{l3.12} with the choices $x_1=x$, $k=k_0:=(n-2)q_0'$, $\ell=0$, $\beta=\beta_0:= 4 q_0' - [4n/(n+4)]$, and 
$\varepsilon = \varepsilon_0 := 4n/(n+4)$, to obtain
\begin{equation}\lb{3.58c}
|\psi(x)|\leq \widetilde{d}_n C_{n,k_0,0,\beta_0, \varepsilon_0}^{1/q_0'}\|\psi\|_{L^{q_0}(\bbR^n)}, 
\quad x\in \bbR^n.
\end{equation}
One notes that the hypotheses of Lemma \ref{l3.12} are satisfies since $k=(n-2)q_0' < (n-2)\cdot n/(n-2) = n$, while
\begin{equation}
\beta = 4q_0' - 4n/(n+4) \geq 8n/(n+4) - 4n/(n+4) > 0, \quad \langle\dott\rangle^{-4q_0'} 
= \langle\dott\rangle^{-\beta_0-4n/(n+4)},
\end{equation}
and
\begin{equation}
k+\ell+\beta = n q_0' + 2 q_0' - 4n/(n+4) \geq n q_0' + 4n/(n+4) - 4n/(n+4) = n q_0' > n.
\end{equation}
The inequality in \eqref{3.58c} implies $\psi\in L^{\infty}(\bbR^n)$ for $5 \leq n \leq  7$.
In Remark \ref{rA.1} we will illustrate why the same line of reasoning fails for $n \geq 8$.

Finally, we will prove that if $\ker(h) \supsetneqq \{0\}$ then one necessarily also obtains that   
$\ker \big(\big[I_{L^2(\bbR^n)} + \ol{v_2 (h_0 + 0 I_{L^2(\bbR^n)})^{-1} v_1}\big]\big) 
\supsetneqq \{0\}$. Indeed, if $0 \neq \psi_0 \in \ker(h)$, then 
$\phi_0 := v_2 \psi_0 = u_v v_1 \psi_0 \in L^2(\bbR^n)$ and hence 
$v_1 \phi_0 \in L^2(\bbR^n)$. Then, $h \psi_0 = 0$ yields 
$\Delta \psi_0 = v \psi_0 = v_1 v_2 \psi_0 = v_1 \phi_0$. 

Thus, an application of \eqref{3.9A} yields for all $n \geq 3$, 
\begin{align}
\begin{split} 
& - \Delta \big[\psi_0 + (h_0 + 0 I_{L^2(\bbR^n)})^{-1} v_1 \phi_0\big] (x)    \no \\
& \quad = - [\Delta \psi_0](x) 
- \Delta_x [r_{0,0} * (v_1 \phi_0)](x)     \no \\
& \quad = - v(x) \psi_0(x) + v_1(x) \phi_0(x)   \no \\
& \quad = - v(x) \psi_0(x) + v(x) \psi_0(x) = 0.
\end{split} 
\end{align}
Consequently,  
\begin{equation}
- \Delta \big[\psi_0 + (h_0 + 0 I_{L^2(\bbR^n)})^{-1} v_1 \phi_0\big] = 0. 
\end{equation}
Since $\psi_0 \in [L^2(\bbR^n)]^N$, and by exactly the same arguments employed in 
\eqref{9.41A}--\eqref{9.42A} also $(h_0 + 0 I_{L^2(\bbR^n)})^{-1} (v_1 \phi_0) \in L^2(\bbR^n)$, one 
concludes that $- \Delta \wti \psi_0 = 0$ in the sense of distributions, where 
\begin{equation}
\wti \psi_0 = \big[\psi_0 + (h_0 + 0 I_{L^2(\bbR^n)})^{-1} v_1 \phi_0\big] \in L^2(\bbR^n).
\end{equation}
Thus, also $\big|\nabla \wti \psi_0 \big| \in L^2(\bbR^n)$ implying $\wti \psi_0 \in W^{2,2}(\bbR^n)$. But then 
$\wti \psi_0 = 0$ since 
$\ker(h_0) = \{0\}$. Hence,
\begin{equation}
\psi_0 = - (h_0 + 0 I_{L^2(\bbR^n)})^{-1} v_1 \phi_0,  
\end{equation} 
implying, $\phi_0 \neq 0$, and 
\begin{align}
0 &= v_2 \wti \psi_0 = v_2 \psi_0 + v_2 (h_0 + 0 I_{L^2(\bbR^n)})^{-1}v_1 \phi_0   \no \\
& = \big[I_{L^2(\bbR^n)} + \ol{v_2 (h_0 + 0 I_{L^2(\bbR^n)})^{-1} v_1}\big] \phi_0, 
\end{align}
that is, 
\begin{equation} 
0 \neq \phi_0 \in 
\ker \big(\big[I_{L^2(\bbR^n)} + \ol{v_2 (h_0 + 0 I_{L^2(\bbR^n)})^{-1} v_1}\big]\big).
\end{equation}  
This concludes the proof. 
\end{proof}
%%%%%%%

%%%%%%%
\begin{remark} \lb{r9.8b} 
Employing $\Delta \psi = v \psi$ with $v \in L^r(\bbR^n)$ for $r> n/4$ and $\psi \in L^q(\bbR^n)$ for 
$q \in (3,\infty) \cup \{\infty\}$, respectively, $\Delta \psi = v_1 \phi$, 
with $v_1 \in L^s(\bbR^n)$ for $s > n/2$ and $\phi \in L^2(\bbR^n)$, an application of H\"older's inequality yields 
additional $L^p(\bbR^n)$-properties of $\Delta \psi$, but we omit further details at this point. \hfill $\diamond$
\end{remark}
%%%%%%%

%%%%%%%
\begin{remark} \lb{r9.8a} 
$(i)$ For basics on the Birman--Schwinger principle in an abstract context, especially, if the energy parameter 
in the Birman--Schwinger operator belongs to the resolvent set of the unperturbed operator, we refer to \cite{GLMZ05} (cf.\ also \cite{BGHN16}, \cite{GHN15}) and the extensive literature cited therein. 
In the concrete case of Schr\"odinger operators, relations \eqref{9.37}, \eqref{9.38} are discussed at length in \cite{AGH82}, \cite{BGD88}, \cite{BGDW86}, \cite{EGG14}, \cite{EG13}, \cite{ES04}, \cite{ES06},  
\cite{GH87}, \cite{Je80}--\cite{JN01}, \cite{Mu82}, \cite{To17} (see also the list of references quoted therein). \\[1mm] 
$(ii)$ In physical notation (see, e.g., \cite[footnote~3 on p.~300]{Ne02} for details), the zero-energy resonances in Cases $(II)$ and $(IV)$ for $n=3,4$, are $s$-wave resonances (i.e., corresponding to angular momentum zero)  in the case where $V$ is spherically symmetric (see also the discussion in \cite{KS80}). \\[1mm]
$(iii)$ As mentioned in Remark \ref{r5.1}, the absence of zero-energy resonances is well-known in dimensions $n \geq 5$, see \cite{Je80}. \\[1mm] 
$(iv)$ For discussions of the threshold behavior of resolvents of Schr\"odinger operators in dimensions 
$n = 1,2$ (the cases not studied in this paper) we refer to \cite{BGW83}--\cite{BGK87}, \cite{EG13}, \cite{JN01}, \cite{KS80}, \cite{Mu82}, \cite{To17}. The cases 
$n \geq 3$ are treated in \cite{AGH82}, \cite{EGG14}, \cite{EGS09}, \cite{ES04}, \cite{ES06}, \cite{GH87}, \cite{Je80}--\cite{JN01}, \cite{KS80}, \cite{Mu82}, \cite{Ya05a}. 
${}$ \hfill $\diamond$
\end{remark}
%%%%%%%

\noindent 
{\bf Massless Dirac Operators in $\boldsymbol{\bbR^n}$, $\boldsymbol{n \geq 2}$.}
Next, we turn to the case of massless Dirac operators $H$ in dimension $n \geq 2$ as defined in \eqref{4.2},  
and start by making the following assumptions on the matrix-valued potential $V$. 

%%%%%%%
\begin{hypothesis} \lb{h9.5}
Let $n \in \bbN$, $n \geq 2$. Assume the a.e.~self-adjoint matrix-valued potential 
$V = \{V_{\ell,m}\}_{1 \leq \ell,m \leq N}$ satisfies for some $C \in (0,\infty)$, 
\begin{align}
\begin{split} 
& V \in [L^{\infty} (\bbR^n)]^{N \times N},       \\
& |V_{\ell,m}(x)| \leq C \langle x \rangle^{- 2} \, 
\text{ for a.e.~$x \in \bbR^n$, $1 \leq \ell,m \leq N$.}    \lb{9.31A}
\end{split} 
\end{align}
In addition, alluding to the polar decomposition of $V(\dott)$ $($i.e., $V(\dott) = U_V(\dott)|V(\dott)|$$)$ in the following symmetrized form $($cf.\ \cite{GMMN09}$)$, we suppose that 
\begin{equation} 
V = V_1^* V_2 = |V|^{1/2} U_V |V|^{1/2}, \text{ where } \, V_1 = V_1^* = |V|^{1/2}, \quad 
V_2 = U_V |V|^{1/2}. 
\end{equation} 
\end{hypothesis} 
%%%%%%%

We continue with the threshold behavior, that is, the $z=0$ behavior, of $H$: 

%%%%%%%
\begin{definition} \lb{d9.6} Assume Hypothesis \ref{h9.5}. \\[1mm] 
$(i)$ The point $0$ is called a zero-energy eigenvalue of $H$ if $H \Psi = 0$ has a distributional solution $\Psi$ satisfying 
\begin{equation} 
\Psi \in \dom(H) = [W^{1,2}(\bbR^n)]^N     
\end{equation}
$($equivalently, $\ker(H) \supsetneqq \{0\}$$)$. \\[1mm] 
$(ii)$ The point $0$ is called a zero-energy $($or threshold\,$)$ resonance of $H$ if 
\begin{equation}
\ker\big(\big[I_{[L^2(\bbR^n)]^N} + \ol{V_2 (H_0 - (0 + i 0) I_{[L^2(\bbR^n)]^N})^{-1} V_1^*}\big]\big)  
\supsetneqq \{0\}, 
\end{equation}
and if there exists $0 \neq \Phi \in \ker\big(\big[I_{[L^2(\bbR^n)]^N} + \ol{V_2 (H_0 - (0 + i 0) I_{[L^2(\bbR^n)]^N})^{-1} V_1^*}\big]\big)$ such that $\Psi$ defined by 
\begin{align}
\begin{split} 
\Psi(x) &= - \big((H_0 - (0 + i 0) I_{[L^2(\bbR^n)]^N})^{-1} V_1^* \Phi\big)(x)     \lb{9.46A} \\
& = - i 2^{-1} \pi^{- n/2} \Gamma(n/2) \int_{\bbR^n} d^n y \, |x-y|^{-n} 
[\alpha \cdot (x - y)] V_1(y)^* \Phi(y)    
\end{split} 
\end{align}
$($for a.e.~$x \in \bbR^n$, $n \geq 2$$)$ is a distributional solution of $H \Psi = 0$ satisfying 
\begin{equation} 
\Psi \notin [L^2(\bbR^n)]^N.
\end{equation} 
$(iii)$ $0$ is called a regular point for $H$ if it is neither a zero-energy eigenvalue nor a zero-energy 
resonance of $H$.  
\end{definition}
%%%%%%%

Additional properties of $\Psi$ are isolated in Theorem \ref{t9.7}.

While the point $0$ being regular for $H$ is the generic situation, zero-energy eigenvalues and/or resonances are exceptional cases. 

Next, we introduce the following convenient abbreviation (for $x, y \in \bbR^n$, $x \neq y$):
\begin{align}
R_{0,0}(x-y) &= \lim_{\substack{z \to 0 \\ z \in \ol{\bbC_+}\backslash\{0\}}} G_0(z;x,y) 
= G_0(0+i0;x,y)   \no \\ 
&= i 2^{-1} \pi^{-n/2} \Gamma(n/2) \, \alpha \cdot \f{(x - y)}{|x - y|^n} \no \\
& = \begin{cases} (2 \pi)^{-1} i \alpha \cdot [\nabla_x \ln(|x-y|)], & n=2, \\
- i \alpha \cdot [\nabla_x g_0(0;x,y)], & n\geq 3.
\end{cases}    \lb{9.46} 
\end{align}

%%%%%%%
\begin{theorem} \lb{t9.7}
Assume Hypothesis \ref{h9.5}.  \\[1mm] 
$(i)$ If $n=2$, there are precisely four possible cases: \\[1mm]
Case $(I)$: $0$ is regular for $H$. \\[1mm]
Case $(II)$: $0$ is a $($possibly degenerate\footnote{One can show that if $n=2$, the degeneracy in Case $(II)$ is at most two, see \cite{EGG18}.}\,$)$ resonance of $H$. In this case, the resonance 
functions $\Psi$ satisfy  
\begin{align} 
\begin{split} 
& \Psi \in [L^q(\bbR^2)]^2, \quad q \in (2, \infty) \cup \{\infty\},  \quad 
\nabla \Psi \in [L^2(\bbR^2)]^{2 \times 2},    \\ \lb{3.60b}
& \Psi \notin [L^2(\bbR^2)]^2.
\end{split}
\end{align}
Case $(III)$: $0$ is a $($possibly degenerate\,$)$ eigenvalue of $H$. In this case, the corresponding 
eigenfunctions $\Psi \in \dom(H) = \big[W^{1,2}(\bbR^2)\big]^2$ of $H \Psi = 0$ also satisfy 
\begin{equation} 
\Psi \in [L^q(\bbR^2)]^2, \quad q \in [2, \infty) \cup \{\infty\}.    \lb{3.60c}
\end{equation} 
Case $(IV)$: A possible mixture of Cases $(II)$ and $(III)$. \\[1mm]
$(ii)$ If $n \in \bbN$, $n \geq 3$, there are precisely two possible cases: \\[1mm]
Case $(I)$: $0$ is regular for $H$. \\[1mm]
Case $(II)$: $0$ is a $($possibly degenerate\,$)$ eigenvalue of $H$. In this case, the corresponding eigenfunctions $\Psi \in \dom(H) = \big[W^{1,2}(\bbR^n)\big]^N$ of $H \Psi = 0$ also satisfy
\begin{equation}
\Psi \in \big[L^q(\bbR^n)\big]^N, \quad q \in \begin{cases} (3/2, \infty) \cup \{\infty\}, & n=3, \\
(4/3,4), & n=4, \\
(2n/(n+2), 2n/(n-2)), & n \geq 5.
\end{cases}     \lb{3.70A} 
\end{equation}
In particular, there are no zero-energy resonances of $H$ in dimension $n \geq 3$. \\[1mm] 
$(iii)$ The point $0$ is regular for $H$ if and only if 
\begin{equation}
\ker\big(\big[I_{[L^2(\bbR^n)]^N} + \ol{V_2 (H_0 - (0 + i 0) I_{[L^2(\bbR^n)]^N})^{-1} V_1^*}\big]\big) 
= \{0\}. 
\end{equation}
\end{theorem}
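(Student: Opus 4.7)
The strategy parallels the Schr\"odinger case in Theorem \ref{t9.7a}, but with the threshold kernel $R_{0,0}(x-y)$ of \eqref{9.46} whose pointwise norm is of order $|x-y|^{1-n}$ rather than the Schr\"odinger order $|x-y|^{2-n}$. The one-unit-more-singular behavior shifts the critical dimension for nonexistence of resonances from $n\geq 5$ down to $n\geq 3$. Starting from the homogeneous Birman--Schwinger equation
\[
\big[I_{[L^2(\bbR^n)]^N} + \ol{V_2 (H_0 - (0+i0)I)^{-1} V_1^*}\big]\Phi_0 = 0,\quad 0\neq \Phi_0\in [L^2(\bbR^n)]^N,
\]
one defines $\Psi_0$ by the Lippmann--Schwinger formula \eqref{9.46A}. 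Since $\|R_{0,0}(x-y)\|_{\cB(\bbC^N)}\leq c_n|x-y|^{1-n}$ and $|V_1(y)|\leq C\langle y\rangle^{-1}$, the pointwise bound
\[
\|\Psi_0(x)\|_{\bbC^N}\leq C_n \int_{\bbR^n} d^n y\, |x-y|^{1-n}\langle y\rangle^{-1}\|\Phi_0(y)\|_{\bbC^N}
\]
emerges as the central estimate. One then checks, as in \eqref{Zz3.31}, that $\Psi_0\in L^1_{\loc}$ is a distributional solution of $H\Psi_0=0$, so that $\Phi_0 = V_2\Psi_0$.

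The decisive step (and the main new point compared with the Schr\"odinger case) is the application of Theorem \ref{t5.6}\,$(ii)$ to the kernel $\wti K_{0,1}(x,y) = |x-y|^{1-n}\langle y\rangle^{-1}$ with $p=p'=2$. The condition $d=1<n/p'=n/2$ holds precisely when $n\geq 3$, and in that case the integral operator is bounded on $L^2(\bbR^n)$, yielding $\Psi_0\in [L^2(\bbR^n)]^N$ directly. The distributional identity $H_0\Psi_0 = -V\Psi_0$, together with $V\in [L^\infty(\bbR^n)]^{N\times N}$, then gives $H_0\Psi_0\in [L^2(\bbR^n)]^N$, hence $\Psi_0\in \dom(H_0)=[W^{1,2}(\bbR^n)]^N=\dom(H)$. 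This proves that every would-be resonance function is in fact an eigenfunction when $n\geq 3$, establishing the nonexistence assertion in item $(ii)$.

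The remaining $L^q$-bounds in \eqref{3.70A} are obtained by bootstrapping. Writing $\Phi_0 = V_2\Psi_0$ with $|V_2|\leq C\langle\cdot\rangle^{-1}$, one has
\[
\|\Psi_0(x)\|_{\bbC^N}\leq \wti C_n \cR_{1,n}\bigl(\langle\cdot\rangle^{-2}\|\Psi_0(\cdot)\|_{\bbC^N}\bigr)(x),
\]
and combining Theorem \ref{t5.4A}\,$(ii)$ (with $\alpha=1$) with H\"older's inequality, as was done in the Schr\"odinger proof in \eqref{9.61A} and the subsequent case analysis, produces the stated $q$-ranges for $n=3,4$ and for $n\geq 5$. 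The $L^\infty$ endpoint in each dimension is then extracted by a H\"older-plus-Lemma \ref{l3.12} computation identical in spirit to \eqref{3.38c}--\eqref{3.58c}. For $n=2$, where Theorem \ref{t5.6}\,$(ii)$ fails at $p=2$, the same Riesz/H\"older machinery applied to the Lippmann--Schwinger bound still furnishes $\Psi\in [L^q(\bbR^2)]^2$ for $q\in(2,\infty)\cup\{\infty\}$; eigenfunctions (Case $(III)$) automatically satisfy $\Psi\in [L^2(\bbR^2)]^2$ by definition and inherit the higher $L^q$ integrability from the bootstrap, while $\nabla\Psi\in[L^2(\bbR^2)]^{2\times 2}$ follows for resonance functions from $H_0\Psi=-V\Psi\in[L^2(\bbR^2)]^2$ coupled with $H_0^2=-I_2\Delta$ in \eqref{2.7}.

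For item $(iii)$, the equivalence between regularity of $0$ and triviality of the Birman--Schwinger kernel is proved exactly as in the Schr\"odinger case: given $\Psi\in\ker(H)\setminus\{0\}$, set $\Phi:=V_2\Psi\in[L^2(\bbR^n)]^N$ and form $\wti\Psi:=\Psi+(H_0-(0+i0)I)^{-1}V_1^*\Phi$. The Dirac analogue of the distributional identity $H_0 R_{0,0}=\delta I_N$ (following from $R_{0,0}=-i\alpha\cdot\nabla g_0(0;\cdot,\cdot)$ in \eqref{9.46} and \eqref{3.9A}, together with $H_0^2=-I_N\Delta$) shows $H_0\wti\Psi=0$; since $\wti\Psi\in[L^2(\bbR^n)]^N$ (by the Theorem \ref{t5.6}\,$(ii)$ step above for $n\geq 3$, or the direct Riesz bound for $n=2$) and $\ker(H_0)=\{0\}$, we conclude $\wti\Psi=0$ and hence $\Phi\in\ker\bigl(\bigl[I + \ol{V_2(H_0-(0+i0)I)^{-1}V_1^*}\bigr]\bigr)\setminus\{0\}$. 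The main obstacle is bookkeeping: correctly combining Theorems \ref{t5.4A} and \ref{t5.6} to pin down the precise $L^q$ intervals for each $n$, and handling the $n=2$ resonance case where $L^2$-boundedness is unavailable.
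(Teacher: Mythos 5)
Your proposal follows essentially the same route as the paper's proof: the Birman--Schwinger equation, the pointwise kernel bound $\|R_{0,0}(x-y)\|\lesssim |x-y|^{1-n}$, Theorem~\ref{t5.6}\,$(ii)$ with $c=0$, $d=1$, $p=p'=2$ to get $L^2$-membership for $n\geq 3$ (the decisive shift from Schr\"odinger's $n\geq 5$), bootstrapping via $\Phi_0=V_2\Psi_0$ and Riesz potentials $\cR_{1,n}$, and the $\wti\Psi$ argument for item~$(iii)$. These are all faithful to the paper.

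There is, however, one concrete error and one related omission. You write that ``The $L^\infty$ endpoint in each dimension is then extracted by a H\"older-plus-Lemma~\ref{l3.12} computation identical in spirit to \eqref{3.38c}--\eqref{3.58c}.'' This is false for the massless Dirac case. The statement of Theorem~\ref{t9.7} only asserts $L^\infty$-membership for $n=2,3$; for $n=4$ the range is $(4/3,4)$ and for $n\geq 5$ it is $(2n/(n+2),2n/(n-2))$, neither of which includes $\infty$. Moreover, the paper's Remark~\ref{rA.2} explicitly demonstrates that the H\"older-plus-Lemma~\ref{l3.12} approach \emph{cannot} work for $n\geq 4$: one would need $k=(n-1)q'<n$, i.e., $q'<n/(n-1)$, but the available $q$-range forces $q'\geq 2n/(n+2)>n/(n-1)$ for $n\geq 4$. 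So there is no ``$L^\infty$ endpoint in each dimension''; the Schr\"odinger template (where $L^\infty$ persists up to $n=7$) does not transfer because the Dirac kernel is one power more singular. Relatedly, you assert the Riesz bootstrap alone ``produces the stated $q$-ranges for $n=3,4$ and $n\geq 5$,'' but for $n=3$ the bootstrap yields only $q\in(3/2,6)$; the full claimed range $(3/2,\infty)\cup\{\infty\}$ requires separately establishing $\Psi\in [L^{\infty}(\bbR^3)]^4$ (via the $n=3$ Lemma~\ref{l3.12} computation, which \emph{does} work in that dimension) and then invoking $L^2\cap L^\infty\subset L^q$. For $n=4$ and $n\geq 5$ the bootstrap does produce the stated ranges directly, and no $L^\infty$ claim is made or needed there.
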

%%%%%%%
\begin{proof}
Since $G_0(0+i0;x,y)$, $x\neq y$, exists for all $n \geq 2$ (cf.\ \eqref{5.18}), the 
Birman--Schwinger eigenvalue equation  
\begin{equation} 
\big[I_{[L^2(\bbR^n)]^N} + \ol{V_2 (H_0 - (0 + i 0) I_{[L^2(\bbR^n)]^N})^{-1} V_1^*}\big] \Phi_0 = 0,  \quad 
0 \neq \Phi_0 \in [L^2(\bbR^n)]^N,   \lb{9.36} 
\end{equation}
gives rise to a distributional zero-energy solution $\Psi_0 \in [L^1_{\loc}(\bbR^n)]^N$ of $H \Psi_0 = 0$ in 
terms of $\Phi_0$ of the form (for a.e.~$x \in \bbR^n$, $n \geq 2$),
\begin{align}
& \Psi_0(x) = - \big((H_0 - (0 + i 0) I_{[L^2(\bbR^n)]^N})^{-1} V_1^* \Phi_0\big)(x)     \no \\
& \hspace*{8.5mm} = - [R_{0,0} * (V_1^* \Phi_0)](x)    \lb{9.36a} \\
& \hspace*{8.5mm} = - i 2^{-1} \pi^{- n/2} \Gamma(n/2) \int_{\bbR^n} d^n y \, |x-y|^{-n} 
[\alpha \cdot (x - y)] V_1(y)^* \Phi_0(y),   \lb{9.37} \\
& \hspace*{8.5mm} = - i 2^{-1} \pi^{- n/2} \Gamma(n/2) \int_{\bbR^n} d^n y \, |x-y|^{-n} 
[\alpha \cdot (x - y)] V_1(y)^* V_2 (y) \Psi_0(y),   \lb{9.37a} \\
& \Phi_0(x) = (V_2 \Psi_0)(x).       \lb{9.38}
\end{align}
In particular, one concludes that $\Psi_0 \neq 0$. Thus, one estimates, 
with $\|V_1(\dott)\|_{\bbC^{N \times N}} \leq c\langle \dott \rangle^{-1}$ and some constant $d_n \in (0, \infty)$, 
\begin{align}
\begin{split}
\|\Psi_0(x)\|_{\bbC^N} &\leq d_n \int_{\bbR^n} d^n y \, |x - y|^{1-n} \langle y \rangle^{-1} 
\|\Phi_0(y)\|_{\bbC^N}   \lb{9.41} \\
& = d_n \cR_{1,n}\big(\langle \dott \rangle^{-1} \|\Phi_0(\dott)\|_{\bbC^N}\big)(x),  \quad x \in \bbR^n.  
\end{split}
\end{align} 
An application of Theorem \ref{t5.6}\,$(ii)$ with $c=0$, $d=1$, $p=p'=2$, and the inequality $1 < n/2$, combined with 
$\|\Phi_0(\, \cdot \,)\|_{\bbC^N} \in L^2(\bbR^n)$, then yield 
\begin{equation}
\|\Psi_0(\, \cdot \,)\|_{\bbC^N} \in L^2(\bbR^n) \, \text{ and hence, } \, \Psi_0 \in [L^2(\bbR^n)]^N, 
\quad n \geq 3.     \lb{9.42} 
\end{equation} 
To prove that actually $\Psi_0 \in \dom(H) = [W^{1,2}(\bbR^n)]^N$, it suffices to argue as follows:  
\begin{equation}
i \alpha \cdot \nabla \Psi_0 = - V \Psi_0 \in [L^2(\bbR^n)]^N    \lb{9.43}
\end{equation}     
in the sense of distributions since $V \in [L^{\infty} (\bbR^n)]^{N \times N}$ and $\Psi_0 \in [L^2(\bbR^n)]^N$.  
Given the fact $\dom(H_0) = \big[W^{1,2}(\bbR^n)\big]^N$ (cf.\ \eqref{2.2}), one concludes  
\begin{equation}
\Psi_0 \in \big[W^{1,2}(\bbR^n)\big]^N, \quad n \geq 3.     \lb{9.45}
\end{equation}

Returning to the estimate \eqref{9.41}, and invoking the Riesz potential $\cR_{1,n}$ (cf.\ Theorem \ref{t5.4A}), 
one obtains (for some constant $\wti d_n \in (0,\infty)$)
\begin{align}
\|\Psi_0(x)\|_{\bbC^N} & \leq d_n \int_{\bbR^n} d^n y \, |x - y|^{1-n} \langle y \rangle^{-1} \|\Phi_0(y)\|_{\bbC^N}  \no \\
& \leq \wti d_n \cR_{1,n} \big(\langle \dott \rangle^{-1} \|\Phi_0(\dott)\|_{\bbC^N}\big)(x),  \quad x \in \bbR^n, 
\lb{3.80A} 
\end{align}
and hence \eqref{5.51} implies (for some constant $\wti C_{p,q,n} \in (0,\infty)$)  
\begin{align}
\|\Psi_0\|_{[L^q(\bbR^n)]^N} & \leq \wti d_n 
\big\|\cR_{1,n} \big(\langle \dott \rangle^{-1} \|\Phi_0(\dott)\|_{\bbC^N}\big)
\big\|_{L^q(\bbR^n)}    \no \\
& \leq \wti C_{p,q,n} \big\|\langle \dott \rangle^{-1} \|\Phi_0(\dott)\|_{\bbC^N}\big\|_{L^p(\bbR^n)}   \no \\
& \leq \wti C_{p,q,n} \big\|\langle \dott \rangle^{-1} \big\|_{L^s(\bbR^n)} \|\|\Phi_0(\dott)\|_{\bbC^N}\|_{L^2(\bbR^n)} 
\no \\
& = \wti C_{p,q,n} \big\|\langle \dott \rangle^{-1}\big\|_{L^s(\bbR^n)} \|\Phi_0\|_{[L^2(\bbR^n)]^N},   \lb{9.61} \\
& \hspace*{-2.75cm} 1 < p < q < \infty, \; p^{-1} = q^{-1} + n^{-1}, \; s = 2qn [2n + 2q - qn]^{-1} \geq 1.  \no
\end{align}
In particular, 
\begin{equation}
p = qn/(n + q), \quad 2n + 2q - qn > 0. 
\end{equation}

\noindent 
$(a)$ The case $n=2$: Then one can choose $q \in (2,\infty)$, hence  
$p = 2q/(q+2) \in (1,2)$, and $s = q > 2$. Thus, \eqref{9.61} and 
$\big\|\langle \dott \rangle^{-1}\big\|_{L^s(\bbR^2)} < \infty$ imply
\begin{equation}
\Psi_0 \in [L^q(\bbR^2)]^N, \quad q \in (2,\infty). 
\end{equation}

Recalling $R_{0,0}(\dott)$ in \eqref{9.46}, this implies 
\begin{align}
\begin{split} 
- i \alpha \cdot \nabla_x R_{0,0}(x-y) = - \Delta_x g_{0}(0;x,y) I_N = \delta(x-y) I_N,&   \lb{9.73}\\
x, y \in \bbR^n, \; x \neq y, \; n \geq 2,&
\end{split} 
\end{align}
in the sense of distributions. Here we abused notation a bit  and denoted also in the case $n=2$, 
\begin{equation}
g_0(0;x,y) = - (2 \pi)^{-1} \ln(|x-y|), \quad x, y \in \bbR^2, \; x \neq y, \; n=2.    \lb{9.74} 
\end{equation}
Thus, one obtains 
\begin{align}
& i \alpha \cdot (\nabla \Psi)(x) = 
- i \alpha \cdot \nabla_x [R_{0,0} * (V_1^* \Phi_0)](x) = - i \alpha \cdot \nabla_x [- i (\alpha \cdot \nabla_x g_0 * (V_1^* \Phi_0)](x)   \no \\
& \quad = (- \Delta_x g_{0} I_N) * (V_1^* \Phi_0)](x) = (V_1^* \Phi_0)(x)  \in [L^2(\bbR^2)]^2,   \lb{9.75} 
\end{align}
proving $\nabla \Psi_0 \in [L^2(\bbR^2)]^{2 \times 2}$, upon employing the fact that $[\alpha \cdot p]^2 = I_N |p|^2$, 
$p \in \bbR^n$.    

To prove that $\Psi\in [L^{\infty}(\bbR^2)]^2$ in \eqref{3.60b} and \eqref{3.60c}, one applies \eqref{9.38} to the inequality in \eqref{9.41}, and then employs the condition $\|V_2(\dott)\|_{\bbC^{2\times 2}}\leq C\langle \dott\rangle^{-1}$ for some constant $C\in (0,\infty)$ to obtain
\begin{equation}
\|\Psi_0(x)\|_{\bbC^2} \leq \widetilde{d}_2 \int_{\bbR^2}d^2y\, |x-y|^{-1}\langle y\rangle^{-2}\|\Psi_0(y)\|_{\bbC^2},\quad x\in \bbR^2,
\end{equation}
where $\widetilde{d}_2\in (0,\infty)$ is an appropriate $x$-independent constant.  By H\"older's inequality,
\begin{equation}\lb{3.81b}
\|\Psi_0(x)\|_{\bbC^2} \leq \widetilde{d}_2\bigg(\int_{\bbR^2}d^2y\, |x-y|^{-3/2}\langle y\rangle^{-3}\bigg)^{2/3} \bigg(\int_{\bbR^2}d^2y\, \|\Psi_0(y)\|_{\bbC^2}^3 \bigg)^{1/3},\quad x\in \bbR^2.
\end{equation}
The second integral on the right-hand side in \eqref{3.81b} is finite since $\Psi_0 \in [L^3(\bbR^2)]^2$.  Choosing $x_1=x$, $k=3/2$, $\ell=0$, $\beta=2$, and $\varepsilon = 1$ in Lemma \ref{l3.12}, one infers that
\begin{equation}\lb{3.82b}
\int_{\bbR^2}d^2y\, |x-y|^{-3/2}\langle y\rangle^{-3} \leq C_{2,3/2,0,2,1},\quad x\in \bbR^2. 
\end{equation}
Hence, the containment $\Psi_0\in [L^{\infty}(\bbR^2)]^2$ follows from \eqref{3.81b} and \eqref{3.82b}.

\noindent 
$(b)$ The case $n \geq 3$: By \eqref{9.45} we know that $\Psi_0 \in \big[W^{1,2}(\bbR^n)\big]^N$. Employing 
the fact that $\Phi_0 = V_2 \Psi_0$ in the first line of \eqref{3.80A}, one obtains 
\begin{align}
\|\Psi_0(x)\|_{\bbC^N} & \leq \wti D_n \int_{\bbR^n} d^n y \, |x - y|^{1-n} \langle y \rangle^{-2} \|\Psi_0(y)\|_{\bbC^N}  \no \\
& =D_n \cR_{1,n} \big(\langle \dott \rangle^{-2} \|\Psi_0(\dott)\|_{\bbC^N}\big)(x),  \quad x \in \bbR^n, 
\lb{9.77A} 
\end{align}
for some constants $\wti D_n, D_n \in (0,\infty)$. Thus, as in \eqref{9.61}, \eqref{5.51} implies for $n \geq 3$, 
\begin{align}
\|\Psi_0\|_{[L^q(\bbR^n)]^N} & \leq D_n 
\big\|\cR_{1,n} \big(\langle \dott \rangle^{-2} \|\Psi_0(\dott)\|_{\bbC^N}\big)
\big\|_{L^q(\bbR^n)}    \no \\
& \leq \wti D_{p,q,n} \big\|\langle \dott \rangle^{-2} \|\Psi_0(\dott)\|_{\bbC^N}\big\|_{L^p(\bbR^n)}   \no \\
& \leq \wti D_{p,q,n} \big\|\langle \dott \rangle^{-2} \big\|_{L^s(\bbR^n)} \|\|\Psi_0(\dott)\|_{\bbC^N}\|_{L^2(\bbR^n)} 
\no \\
& = \wti D_{p,q,n} \big\|\langle \dott \rangle^{-2}\big\|_{L^s(\bbR^n)} \|\Psi_0\|_{[L^2(\bbR^n)]^N},   \lb{9.78A} \\
& \hspace*{-2.75cm} 1 < p < q < \infty, \; p^{-1} = q^{-1} + n^{-1}, \; s = 2qn [2n + 2q - qn]^{-1} \geq 1,   \no
\end{align}
for some constant $\wti D_{p,q,n} \in (0,\infty)$. In particular, one again has $p = qn/(n + q)$ and 
$2n + 2q - qn > 0$. The latter condition implies $q < 2n/(n-2)$. The requirement $p > 1$ results in $q > n/(n-1)$, and the condition $s \geq 1$ yields $q \geq 2n/(3n-2)$ which, however, is superseded by $q > p > 1$. 
Moreover, the requirement $\big\|\langle \dott \rangle^{-2}\big\|_{L^s(\bbR^n)} < \infty$ yields $q > 2n/(n+2)$. 
Putting it all together implies \eqref{3.70A}.

To prove the containment $\Psi\in [L^{\infty}(\bbR^3)]^4$ in \eqref{3.70A}, one applies \eqref{9.38} to the inequality in \eqref{9.41} to obtain
\begin{equation}
\|\Psi_0(x)\|_{\bbC^4} \leq d_3 \int_{\bbR^3}d^3y\, |x-y|^{-2}\langle y\rangle^{-2}\|\Psi_0(y)\|_{\bbC^4},\quad x\in \bbR^3.
\end{equation}
By H\"older's inequality (with conjugate exponents $q'=27/20$ and $q=27/7$), one infers that
\begin{align}
& \|\Psi_0(x)\|_{\bbC^4} \leq d_3\bigg(\int_{\bbR^3}d^3y\, |x-y|^{-27/10}\langle y\rangle^{-27/10} 
\bigg)^{20/27} 
\bigg(\int_{\bbR^3}d^3y\, \|\Psi_0(y)\|_{\bbC^4}^{2/77} \bigg)^{7/27},\no \\
& \hspace*{9.7cm}  x\in \bbR^3.     \lb{3.86b}
\end{align}
The second integral on the right-hand side in \eqref{3.86b} is finite since $\Psi_0\in [L^{27/7}(\bbR^3)]^4$, and the first integral on the right-hand side in \eqref{3.86b} may be estimated by taking $x_1=x$, $k=27/10$, $\ell=0$, $\beta=2$, and $\varepsilon = 7/10$ in Lemma \ref{l3.12}, 
\begin{equation}\lb{3.87b}
\int_{\bbR^3}d^3y\, |x-y|^{-\frac{27}{10}}\langle y\rangle^{-\frac{27}{10}}\leq C_{3,\frac{27}{10},0,2,7/10}, 
\quad x\in \bbR^3\backslash\{0\}.
\end{equation}
Hence, the containment $\Psi_0\in [L^{\infty}(\bbR^3)]^4$ follows from \eqref{3.86b} and \eqref{3.87b}.
We will illustrate in Remark \ref{rA.2} that the same line of reasoning fails for $n \geq 4$. 

Finally (following the proof of \cite[Lemma~7.4]{EGG18}), we show that if $\ker(H) \supsetneqq \{0\}$ then also 
$\ker \big(\big[I_{[L^2(\bbR^n)]^N} + \ol{V_2 (H_0 - (0 + i0) I_{[L^2(\bbR^n)]^N})^{-1} V_1^*}\big]\big) 
\supsetneqq \{0\}$. Indeed, if $0 \neq \Psi_0 \in \ker(H)$, then 
$\Phi_0 := V_2 \Psi_0 = U_V V_1 \Psi_0 \in [L^2(\bbR^n)]^N$ and hence 
$V_1^* \Phi_0 \in [L^2(\bbR^n)]^N$. Then, $H \Psi_0 = 0$ yields 
$i \alpha \cdot \nabla  \Psi_0 = V \Psi_0 = V_1^* V_2 \Psi_0 = V_1^* \Phi_0$. 

Thus, applying \eqref{9.46}, \eqref{9.73}--\eqref{9.74} once again, one obtains for all $n \geq 2$, 
\begin{align}
& - i \alpha \cdot \nabla \big[\Psi_0 + (H_0 - (0 + i 0) I_{[L^2(\bbR^n)]^N})^{-1} V_1^* \Phi_0\big] (x)    \no \\
& \quad = - i [\alpha \cdot \nabla \Psi_0](x) 
- i \alpha \cdot \nabla_x [R_{0,0} * (V_1^* \Phi_0)](x)     \no \\
& \quad = - i [\alpha \cdot \nabla \Psi_0](x) 
- i \alpha \cdot \nabla_x [- i (\alpha \cdot \nabla_x g_0 * (V_1^* \Phi_0)](x)   \no \\
& \quad = - i [\alpha \cdot \nabla \Psi_0](x) + (- \Delta_x g_{0} I_N) * (V_1^* \Phi_0)](x)    \no \\
& \quad = - i [\alpha \cdot \nabla \Psi_0](x) + (V_1^* \Phi_0)(x)    \no \\
& \quad = - V(x) \Psi_0(x) + V(x) \Psi_0(x) = 0.
\end{align}
Consequently,  
\begin{equation}
- i \alpha \cdot \nabla [\Psi_0 + (H_0 - (0 + i 0) I_{[L^2(\bbR^n)]^N})^{-1} V_1^* \Phi_0] = 0,
\end{equation}
implying 
\begin{equation}
\Psi_0 + (H_0 - (0 + i 0) I_{[L^2(\bbR^n)]^N})^{-1} V_1^* \Phi_0 = c^\top, 
\end{equation}
for some $c \in \bbC^N$. Since $\Psi_0 \in [L^2(\bbR^n)]^N$, and by exactly the same arguments employed in 
\eqref{9.41}--\eqref{9.42}, also $R_{0,0} * (V_1^* \Phi_0) \in [L^2(\bbR^n)]^N$, one concludes that $c = 0$ and hence
\begin{equation}
\Psi_0 = - (H_0 - (0 + i 0) I_{[L^2(\bbR^n)]^N})^{-1} V_1^* \Phi_0. 
\end{equation} 
Thus, $\Phi_0 \neq 0$, and 
\begin{align}
0 &= V_2 \Psi_0 + V_2 (H_0 - (0 + i 0) I_{[L^2(\bbR^n)]^N})^{-1}V_1^* \Phi_0   \no \\
&= \big[I_{[L^2(\bbR^n)]^N} + \ol{V_2 (H_0 - (0 + i0) I_{[L^2(\bbR^n)]^N})^{-1} V_1^*}\big] \Phi_0, 
\end{align}
that is, 
\begin{equation} 
0 \neq \Phi_0 \in 
\ker \big(\big[I_{[L^2(\bbR^n)]^N} + \ol{V_2 (H_0 - (0 + i0) I_{[L^2(\bbR^n)]^N})^{-1} V_1^*}\big]\big).
\end{equation}  
This concludes the proof. 
\end{proof}
%%%%%%%

%%%%%%%
\begin{remark} \lb{r9.8} 
$(i)$ For basics on the Birman--Schwinger principle in the concrete case of massless Dirac operators see \cite{EGG17}, \cite{EGG18}. \\[1mm] 
$(ii)$ In physical notation, the zero-energy resonances in Cases $(II)$ and $(IV)$ for $n=2$ correspond to eigenvalues $\pm 1/2$ of the spin-orbit operator (cf.\ the operator $S$ in \cite{KOY15}, \cite{KY01}) when 
 $V$ is spherically symmetric, see the discussion in \cite{EGG18}. \\[1mm]
$(iii)$ As mentioned in Remark \ref{r5.1}, the absence of zero-energy resonances is well-known in the 
three-dimensional case $n=3$, see \cite{Ai16}, \cite[Sect.~4.4]{BE11}, \cite{BES08}, \cite{BGW95}, \cite{SU08}, 
\cite{SU08a}, \cite{ZG13}. In fact, for $n=3$ the absence of zero-energy resonances has been shown 
under the weaker decay $|V_{j,k}| \leq C\langle x\rangle^{-1 - \varepsilon}$, $x \in \bbR^3$, in \cite{Ai16}. 
The absence of zero-energy resonances for massless Dirac operators in dimensions $n \geq 4$ as contained in 
Theorem \ref{t9.7}\,$(ii)$ appears to have gone unnoticed in the literature.
${}$ \hfill $\diamond$
\end{remark}
%%%%%%%
\noindent 
{\bf Massive Dirac Operators in $\boldsymbol{\bbR^n}$, $\boldsymbol{n \geq 3}$.}  
Finally, we turn to the case of massive Dirac operators $H(m)$, $m > 0$, in dimension $n \geq 3$ as 
defined in \eqref{4.8}, and start by making the following assumptions on the matrix-valued potential $V$.

%%%%%%%
\begin{hypothesis} \lb{Zh9.5}
Let $n \in \bbN$, $n \geq 3$. Assume the a.e.~self-adjoint matrix-valued potential 
$V = \{V_{j,k}\}_{1 \leq j,k \leq N}$ satisfies for some $C \in (0,\infty)$, 
\begin{align}
\begin{split} 
& V \in [L^{\infty} (\bbR^n)]^{N \times N},       \\
& |V_{j,k}(x)| \leq C \langle x \rangle^{- 4} \, 
\text{ for a.e.~$x \in \bbR^n$, $1 \leq j,k \leq N$.}    \lb{Z9.31A}
\end{split} 
\end{align}
In addition, alluding to the polar decomposition of $V(\dott)$ $($i.e., $V(\dott) = U_V(\dott)|V(\dott)|$$)$ in the following symmetrized form $($cf.\ \cite{GMMN09}$)$, we suppose that 
\begin{equation} 
V = V_1^* V_2 = |V|^{1/2} U_V |V|^{1/2}, \text{ where } \, V_1 = V_1^* = |V|^{1/2}, \quad 
V_2 = U_V |V|^{1/2}. 
\end{equation} 
\end{hypothesis} 
%%%%%%%

We continue with the threshold behavior, that is, the $z=\pm m$ behavior, of $H(m)$: 

%%%%%%%
\begin{definition} \lb{Zd9.6} Assume Hypothesis \ref{h9.5}. \\[1mm] 
$(i)$ The point $\pm m$ is called a threshold eigenvalue of $H(m)$ if $H(m) \Psi = \pm m \Psi$ has a distributional solution $\Psi$ satisfying 
\begin{equation} 
\Psi \in \dom(H(m)) = [W^{1,2}(\bbR^n)]^N     
\end{equation}
$($equivalently, $\ker(H\mp m I_{[L^2(\bbR^n)]^N}) \supsetneqq \{0\}$$)$. \\[1mm] 
$(ii)$ The point $\pm m$ is called a threshold resonance of $H(m)$ if 
\begin{equation}
\ker\big(\big[I_{[L^2(\bbR^n)]^N} + \ol{V_2 (H_0(m) - (\pm m + i 0) I_{[L^2(\bbR^n)]^N})^{-1} V_1^*}\big]\big) 
\supsetneqq \{0\} 
\end{equation}
and if there exists
\begin{equation}
0 \neq \Phi_{\pm m} \in \ker\big(\big[I_{[L^2(\bbR^n)]^N} + \ol{V_2 (H_0(m) - (\pm m + i 0) I_{[L^2(\bbR^n)]^N})^{-1} V_1^*}\big]\big)
\end{equation}
such that $\Psi_{\pm m}$ defined by 
\begin{align}
\Psi_{\pm m}(x) &= - \big((H_0(m) - (\pm m + i 0) I_{[L^2(\bbR^n)]^N})^{-1} V_1^* \Phi\big)(x)     \no \\
&= -\int_{\bbR^n} d^ny \bigg[4^{-1}\pi^{-n/2}\Gamma((n-2)/2)|x-y|^{2-n}(m\beta \pm mI_{N}) \lb{Z9.46A}\\
&\hspace*{2.2cm}+ i2^{-1}\pi^{-n/2}\Gamma(n/2)\alpha\cdot \frac{(x-y)}{|x-y|^n} \bigg]V_1(y)^* \Phi_{\pm m}(y)\no
\end{align} 
$($for a.e.~$x \in \bbR^n$, $n \geq 2$$)$ is a distributional solution of $H(m) \Psi_{\pm m} = \pm m \Psi_{\pm m}$ satisfying 
\begin{equation} 
\Psi_{\pm m} \notin [L^2(\bbR^n)]^N.
\end{equation} 
$(iii)$ $\pm m$ is called a regular point for $H(m)$ if it is neither a threshold eigenvalue nor a threshold 
resonance of $H(m)$.
\end{definition}
%%%%%%%

Additional properties of $\Psi_{\pm m}$ are isolated in Theorem \ref{t9.7}.

While the points $\pm m$ being regular for $H(m)$ is the generic situation, threshold eigenvalues and/or resonances at energies $\pm m$ are exceptional cases. 

Next, we introduce the following convenient abbreviation (for $x, y \in \bbR^n$, $x \neq y$):
\begin{align}
R_{0,\pm m}(x-y) &= \lim_{\substack{z \to \pm m \\ z \in \ol{\bbC_+}\backslash\{\pm m\}}} G_0(m,z;x,y) 
= G_0(m,\pm m+i0;x,y)   \no \\ 
&= 4^{-1}\pi^{-n/2}\Gamma((n-2)/2)|x-y|^{2-n}(m\beta \pm mI_{N})\no\\
&\quad + i2^{-1}\pi^{-n/2}\Gamma(n/2)\alpha\cdot \frac{(x-y)}{|x-y|^n}\no\\
&=g_0(0;x,y)(m\beta \pm mI_N) + G_0(0+i0;x,y)\no\\
&=r_{0,0}(x-y)(m\beta \pm mI_N) + R_{0,0}(x-y),\quad n\in \bbN,\, n\geq 3.\lb{Z9.46}
\end{align}

%%%%%%%
\begin{theorem} \lb{Zt4.3}
Assume Hypothesis \ref{Zh9.5}.  \\[1mm] 
$(i)$ If $n=3,4$, there are precisely four possible cases for each of $\pm m$: \\[1mm]
Case $(I)$: $\pm m$ is regular for $H(m)$. \\[1mm]
Case $(II)$: $\pm m$ is a $($possibly degenerate$)$ resonance of $H(m)$. In this case the resonance 
functions $\Psi_{\pm m}$ satisfy 
\begin{align}
\begin{split} 
& \Psi_{\pm m} \in [L^q(\bbR^n)]^N,\quad q\in \begin{cases} (3,\infty) \cup\{\infty\},& n=3,\\
(2,4),&n=4,
\end{cases}    \\
&\nabla \Psi_{\pm m} \in [L^2(\bbR^n)]^{N \times n}, \quad \Psi_{\pm m} \notin [L^2(\bbR^n)]^N.    \lb{Zz4.11}
\end{split} 
\end{align}
Case $(III)$: $\pm m$ is a $($possibly degenerate\,$)$ eigenvalue of $H(m)$. In this case the corresponding 
eigenfunctions $\Psi_{\pm m} \in \dom(H(m)) = \big[W^{1,2}(\bbR^n)\big]^N$ of 
$H(m) \Psi_{\pm m} = \pm m \Psi_{\pm m}$ also satisfy  
\begin{equation} 
\Psi_{\pm m} \in [L^q(\bbR^n)]^N, \quad q \in \begin{cases} [2,\infty) \cup\{\infty\}, & n=3, \\
[2,4), & n=4. \end{cases}   \lb{Z3.109}
\end{equation} 
Case $(IV)$: A possible mixture of Cases $(II)$ and $(III)$. \\[1mm]
$(ii)$ If $n \in \bbN$, $n \geq 5$, there are precisely two possible cases: \\[1mm]
Case $(I)$: $\pm m$ is regular for $H(m)$. \\[1mm]
Case $(II)$: $\pm m$ is a $($possibly degenerate\,$)$ eigenvalue of $H(m)$. In this case, the corresponding eigenfunctions $\Psi_{\pm m} \in \dom(H(m)) = \big[W^{1,2}(\bbR^n)\big]^N$ of 
$H \Psi_{\pm m} = \pm m \Psi_{\pm m}$ also satisfy
\begin{equation}
\Psi_{\pm m} \in \big[L^q(\bbR^n)\big]^N, \quad q\in
\begin{cases}
(n/(n-1),2n/(n-2)),& 5\leq n\leq 8,\\
(2n/(n+6),2n/(n-4)),& n\geq 9.
\end{cases}    \lb{Z3.70A} 
\end{equation}
In particular, there are no resonances at energies $\pm m$ of $H(m)$ in dimension $n \geq 5$. \\[1mm] 
$(iii)$ The point $\pm m$ is regular for $H(m)$ if and only if 
\begin{equation}
\ker\big(\big[I_{[L^2(\bbR^n)]^N} + \ol{V_2 (H_0(m) - (\pm m + i 0) I_{[L^2(\bbR^n)]^N})^{-1} V_1^*}\big]\big) 
= \{0\}. 
\end{equation}
\end{theorem}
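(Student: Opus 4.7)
The plan is to unify the approaches of Theorem \ref{t9.7a} (Schr\"odinger) and Theorem \ref{t9.7} (massless Dirac), exploiting the decomposition
\begin{equation*}
R_{0,\pm m}(x-y) = r_{0,0}(x-y)(m\beta \pm mI_N) + R_{0,0}(x-y)
\end{equation*}
recorded in \eqref{Z9.46}, which splits the threshold Green's function of $H_0(m)$ into a Schr\"odinger-like Riesz-$2$ contribution (kernel $|x-y|^{2-n}$) plus a massless-Dirac-like Riesz-$1$ contribution (kernel $|x-y|^{1-n}$). Since the first summand decays more slowly at infinity, it governs the dimensional threshold $n \geq 5$ for absence of resonances, exactly as in the Schr\"odinger case.

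I would begin from the Birman--Schwinger equation
\begin{equation*}
\big[I_{\LN} + \ol{V_2(H_0(m) - (\pm m + i0)I_{\LN})^{-1}V_1^*}\big]\Phi_{\pm m} = 0, \quad 0 \neq \Phi_{\pm m} \in \LN,
\end{equation*}
define the candidate $\Psi_{\pm m}(x) = -\int_{\bbR^n}R_{0,\pm m}(x-y)V_1(y)^*\Phi_{\pm m}(y)\,d^ny$ as in \eqref{Z9.46A}, and verify $H(m)\Psi_{\pm m} = \pm m\Psi_{\pm m}$ distributionally (using the decomposition \eqref{Z9.46} together with \eqref{3.9A}, the massless-Dirac identity \eqref{9.73}, and the anticommutation $\alpha_j\beta + \beta\alpha_j = 0$ to obtain $(H_0(m) - (\pm m)I_{\LN})R_{0,\pm m}(\cdot) = \delta \, I_N$). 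Using $\|V_1(y)\|_{\bbC^{N\times N}} \leq C\langle y\rangle^{-2}$ (which follows from $\|V(y)\|_{\bbC^{N\times N}} \leq C\langle y\rangle^{-4}$) then yields the pointwise bound
\begin{equation*}
\|\Psi_{\pm m}(x)\|_{\bbC^N} \leq C\int_{\bbR^n}\bigl(|x-y|^{2-n} + |x-y|^{1-n}\bigr)\langle y\rangle^{-2}\|\Phi_{\pm m}(y)\|_{\bbC^N}\,d^ny.
\end{equation*}
An application of Theorem \ref{t5.6}\,$(ii)$ with $c=0$, $p=p'=2$ to each summand---$d=2$ for the first (requiring $2 < n/2$, hence $n \geq 5$, as in the Schr\"odinger proof), $d=1$ for the second (valid for $n \geq 3$, as in the massless Dirac proof)---places $\Psi_{\pm m} \in \LN$ whenever $n \geq 5$. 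The distributional identity $-i\alpha\cdot\nabla\Psi_{\pm m} = (\pm mI_N - m\beta - V)\Psi_{\pm m} \in \LN$ then promotes $\Psi_{\pm m}$ to $\WoneN = \dom(H(m))$, ruling out resonances at $\pm m$ in dimensions $n \geq 5$.

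The refined $L^q$ assertions in \eqref{Zz4.11}, \eqref{Z3.109} (low dimensions $n=3,4$), and \eqref{Z3.70A} (dimensions $n \geq 5$) would be obtained by bootstrapping: substituting $\Phi_{\pm m} = V_2\Psi_{\pm m}$ converts the weight $\langle y\rangle^{-2}$ into $\langle y\rangle^{-4}$ (using the stronger decay in \eqref{Z9.31A}), after which one applies Theorem \ref{t5.4A} together with H\"older's inequality to each Riesz kernel separately. Essential boundedness in low dimensions follows via H\"older splits combined with Lemma \ref{l3.12}, mirroring the corresponding arguments in the proofs of Theorems \ref{t9.7a} and \ref{t9.7}; the claim $\nabla\Psi_{\pm m} \in [L^2(\bbR^n)]^{N\times n}$ in \eqref{Zz4.11} follows by the argument of \eqref{9.73}--\eqref{9.75} applied to the $R_{0,0}$ summand combined with standard bounds on $\nabla(r_{0,0}\ast V_1^*\Phi_{\pm m})$. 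Part $(iii)$ proceeds by the converse argument: given $0 \neq \Psi_0 \in \ker(H(m) \mp mI_{\LN})$, set $\Phi_0 := V_2\Psi_0 \in \LN$, note $(H_0(m) \mp mI_{\LN})\Psi_0 = -V_1^*\Phi_0$, define $\wti\Psi_0 := \Psi_0 + (H_0(m) - (\pm m + i0)I_{\LN})^{-1}V_1^*\Phi_0$, and verify $(H_0(m) \mp mI_{\LN})\wti\Psi_0 = 0$. Applying $(H_0(m) \pm mI_{\LN})$ and invoking $H_0(m)^2 = (h_0 + m^2)I_N$ from \eqref{5.5a} gives $-\Delta\wti\Psi_0 = 0$; since both summands of $\wti\Psi_0$ lie in $\LN$, one concludes $\wti\Psi_0 = 0$, yielding the nontrivial Birman--Schwinger element $\Phi_0$.

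The main obstacle I anticipate is tracking the precise $L^q$ endpoints in \eqref{Z3.70A}, where the interplay between the Riesz-$1$ and Riesz-$2$ contributions produces the delicate transition between the ranges $(n/(n-1), 2n/(n-2))$ for $5 \leq n \leq 8$ and $(2n/(n+6), 2n/(n-4))$ for $n \geq 9$; identifying which Riesz kernel dictates each endpoint at each dimension, while respecting both the HLS constraints $1 < p < n/\alpha$ and the H\"older requirement $s > n/4$ on the weight exponent, requires careful bookkeeping, possibly combined with interpolation against the Sobolev embedding $\WoneN \hookrightarrow [L^{2n/(n-2)}(\bbR^n)]^N$ available from the eigenfunction regularity.
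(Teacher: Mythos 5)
Your proposal matches the paper's proof in all essential respects: the same decomposition $R_{0,\pm m} = r_{0,0}(m\beta\pm m I_N) + R_{0,0}$ of \eqref{Z9.46}, the same Birman--Schwinger starting point, the same pointwise bound by a sum of $|x-y|^{2-n}$ and $|x-y|^{1-n}$ kernels, the same applications of Theorem \ref{t5.6}\,$(ii)$ with $c=0$, $d=2$ (forcing $n\geq 5$) and $d=1$ (valid for $n\geq 3$), the same bootstrap $\Phi_{\pm m}=V_2\Psi_{\pm m}$ upgrading $\langle y\rangle^{-2}$ to $\langle y\rangle^{-4}$, the same H\"older-plus-Lemma~\ref{l3.12} route to $L^\infty$ in low dimensions, and the same converse structure for part $(iii)$. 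The only deviation is cosmetic: at the end of part $(iii)$ you conclude $\wti\Psi_0=0$ by applying $(H_0(m)\pm mI_N)$ to obtain $-\Delta\wti\Psi_0=0$ and then invoking the absence of nonzero harmonic $L^2$ functions, whereas the paper observes directly that a nonzero $\wti\Psi_{\pm m}\in\ker(H_0(m)\mp mI_N)$ would contradict $\sigma_p(H_0(m))=\emptyset$; both are valid.
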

%%%%%%%
\begin{proof}
Since $G_0(m,\pm m+i0;x,y)$, $x\neq y$, exists for all $n \geq 3$ (cf.\ \eqref{Z9.46}), the 
Birman--Schwinger eigenvalue equation  
\begin{align} 
0&=\big[I_{[L^2(\bbR^n)]^N} + \ol{V_2 (H_0(m) - (\pm m + i 0) I_{[L^2(\bbR^n)]^N})^{-1} V_1^*}\big] \Phi_{\pm m},\lb{Z9.36}\\ 
0 &\neq \Phi_{\pm m} \in [L^2(\bbR^n)]^N,\no
\end{align}
gives rise to a distributional solution $\Psi_{\pm m} \in [L^1_{\loc}(\bbR^n)]^N$ of the equation
\begin{equation}
H(m) \Psi_{\pm m} = \pm m\Psi_{\pm m}
\end{equation}
in terms of $\Phi_{\pm m}$ of the form (for a.e.~$x \in \bbR^n$, $n \geq 3$),
\begin{align}
\Psi_{\pm m}(x) &= - \big((H_0(m) - (\pm m + i 0) I_{[L^2(\bbR^n)]^N})^{-1} V_1^* \Phi_0\big)(x)     \no \\
&= - [R_{0,\pm m} * (V_1^* \Phi_{\pm m})](x)    \lb{Z9.36a} \\
&= - \int_{\bbR^n} d^ny \bigg[4^{-1}\pi^{-n/2}\Gamma((n-2)/2)|x-y|^{2-n}(m\beta \pm mI_{N})\\
&\hspace*{2.2cm}+ i2^{-1}\pi^{-n/2}\Gamma(n/2)\alpha\cdot \frac{(x-y)}{|x-y|^n} \bigg]V_1(y)^* \Phi(y),\lb{Z9.37} \\
\Phi_{\pm m}(x) &= (V_2 \Psi_{\pm m})(x).       \lb{Z9.38}
\end{align}
In particular, one concludes that $\Psi_{\pm m} \neq 0$. Using the inequality $\|V_1(\dott)\|_{\bbC^{N \times N}} \leq c\langle \dott \rangle^{-2}$, one obtains the following estimate for some $d_n,\widetilde{d}_n \in (0, \infty)$ (for a.e.~$x\in \bbR^n$, $n\geq 3$):
\begin{align}
&\|\Psi_{\pm m}(x)\|_{\bbC^N} 
\leq d_n \bigg\{\int_{\bbR^n} d^n y \, |x - y|^{2-n} \langle y \rangle^{-2} 
\|\Phi_{\pm m}(y)\|_{\bbC^N}       \lb{Zz4.19} \\
&\hspace*{2.35cm} + \int_{\bbR^n} d^n y \, |x - y|^{1-n} \langle y \rangle^{-2} 
\|\Phi_{\pm m}(y)\|_{\bbC^N}\bigg\}\no\\
&\quad \leq \widetilde{d}_n\big\{ \cR_{1,n}\big(\langle \,\cdot\, \rangle^{-2}\|\Phi_{\pm m}(\,\cdot\,)\|_{\bbC^N}\big)(x) + \cR_{2,n}\big(\langle \,\cdot\, \rangle^{-2}\|\Phi_{\pm m}(\,\cdot\,)\|_{\bbC^N}\big)(x)\big\}.  \lb{Z9.41}
\end{align} 
The first term on the right-hand side in \eqref{Z9.41} may be estimated as follows:
\begin{align}
&\cR_{1,n}\big(\langle \,\cdot\, \rangle^{-2}\|\Phi_{\pm m}(\,\cdot\,)\|_{\bbC^N}\big)(x) 
=\gamma(1,n)^{-1}\int_{\bbR^n} d^n y \, |x - y|^{1-n} \langle y \rangle^{-2} \|\Phi_{\pm m}(y)\|_{\bbC^N}\no\\
&\quad \leq \gamma(1,n)^{-1} \int_{\bbR^n} d^n y \, |x - y|^{1-n} \langle y \rangle^{-1} \|\Phi_{\pm m}(y)\|_{\bbC^N},   
\quad x\in \bbR^n.    \lb{Z4.22}
\end{align}
An application of Theorem \ref{t5.6}\,$(ii)$ with $c=0$, $d=1$, $p=p'=2$, and the inequality $1 < n/2$, combined with 
$\|\Phi_{\pm m}(\, \cdot \,)\|_{\bbC^N} \in L^2(\bbR^n)$, then yield
\begin{align}
\cR_{1,n}\big(\langle \,\cdot\, \rangle^{-2}\|\Phi_{\pm m}(\,\cdot\,)\|_{\bbC^N}\big)\in L^2(\bbR^n),\quad n\geq 3.\lb{Z4.23}
\end{align}
Similarly, the second term on the right-hand side in \eqref{Z9.41} may be estimated as follows:
\begin{align}
& \cR_{2,n}\big(\langle \,\cdot\, \rangle^{-2}\|\Phi_{\pm m}(\,\cdot\,)\|_{\bbC^N}\big)(x) 
 =\gamma(2,n)^{-1}\int_{\bbR^n} d^n y \, |x - y|^{2-n} \langle y \rangle^{-2} \|\Phi_{\pm m}(y)\|_{\bbC^N},   \no \\
& \hspace*{9.5cm} x\in \bbR^n.     \lb{Z4.20}
\end{align}
An application of Theorem \ref{t5.6}\,$(ii)$ with $c=0$, $d=2$, $p=p'=2$, and the inequality $2 < n/2$, combined with 
$\|\Phi_{\pm m}(\, \cdot \,)\|_{\bbC^N} \in L^2(\bbR^n)$, then yield
\begin{align}
\cR_{2,n}\big(\langle \,\cdot\, \rangle^{-2}\|\Phi_{\pm m}(\,\cdot\,)\|_{\bbC^N}\big) \in L^2(\bbR^n),\quad n\geq 5.\lb{Z4.21}
\end{align}
Thus, \eqref{Z9.41}, \eqref{Z4.21}, and \eqref{Z4.23} imply
\begin{equation}
\|\Psi_{\pm m}(\, \cdot \,)\|_{\bbC^N} \in L^2(\bbR^n) \, \text{ and hence, } \, \Psi_{\pm m} \in [L^2(\bbR^n)]^N, 
\quad n \geq 5.     \lb{Z9.42} 
\end{equation} 
To prove that actually $\Psi_{\pm m} \in \dom(H(m)) = [W^{1,2}(\bbR^n)]^N$ for $n\geq 5$, it suffices to argue as follows:  
\begin{equation}
i \alpha \cdot \nabla \Psi_{\pm m} = - V \Psi_{\pm m}\pm m\Psi_{\pm m} \in [L^2(\bbR^n)]^N    \lb{Z9.43}
\end{equation}     
in the sense of distributions since $V \in [L^{\infty} (\bbR^n)]^{N \times N}$ and $\Psi_{\pm m} \in [L^2(\bbR^n)]^N$.
Given the fact $\dom(H_0(m)) = \big[W^{1,2}(\bbR^n)\big]^N$ (cf.\ \eqref{4.8}), one concludes that 
\begin{equation}
\Psi_{\pm m} \in \big[W^{1,2}(\bbR^n)\big]^N, \quad n \geq 5.     \lb{Z9.45}
\end{equation}

Returning to \eqref{Z9.41}, one applies \eqref{5.51} to estimate (for some constant $C_{p,q,n}^{(1)}\in (0,\infty)$)
\begin{align}
\big\|\cR_{1,n} \big(\langle \dott \rangle^{-2} \|\Phi_{\pm m}(\dott)\|_{\bbC^N}\big)
\big\|_{L^q(\bbR^n)} & \leq C_{p,q,n}^{(1)} \big\|\langle \dott \rangle^{-2} \|\Phi_{\pm m}(\dott)\|_{\bbC^N}\big\|_{L^p(\bbR^n)}   \no \\
& \leq C_{p,q,n}^{(1)} \big\|\langle \dott \rangle^{-2} \big\|_{L^s(\bbR^n)} \big\|\|\Phi_{\pm m}(\dott)\|_{\bbC^N}\big\|_{L^2(\bbR^n)} 
\no \\
& = C_{p,q,n}^{(1)} \big\|\langle \dott \rangle^{-2}\big\|_{L^s(\bbR^n)} \|\Phi_{\pm m}\|_{[L^2(\bbR^n)]^N},   \lb{Z9.61a} \\
& \hspace*{-3.85cm} 1 < p < q < \infty, \; p^{-1} = q^{-1} + n^{-1}, \; s = 2qn [2n + 2q - qn]^{-1} \geq 1,   \no 
\end{align}
so that, in particular, 
\begin{equation}\lb{Z4.30a}
p = qn/(n + q), \quad 2n + 2q - qn > 0. 
\end{equation}
Similarly, one applies \eqref{5.51} to estimate (for some constant $C_{p,q,n}^{(2)}\in (0,\infty)$)
\begin{align}
\big\|\cR_{2,n} \big(\langle \dott \rangle^{-2} \|\Phi_{\pm m}(\dott)\|_{\bbC^N}\big)
\big\|_{L^q(\bbR^n)} & \leq C_{p,q,n}^{(2)} \big\|\langle \dott \rangle^{-2} \|\Phi_{\pm m}(\dott)\|_{\bbC^N}\big\|_{L^p(\bbR^n)}   \no \\
& \leq C_{p,q,n}^{(2)} \big\|\langle \dott \rangle^{-2} \big\|_{L^s(\bbR^n)} \big\|\|\Phi_{\pm m}(\dott)\|_{\bbC^N}\big\|_{L^2(\bbR^n)} 
\no \\
& = C_{p,q,n}^{(2)} \big\|\langle \dott \rangle^{-2}\big\|_{L^s(\bbR^n)} \|\Phi_{\pm m}\|_{[L^2(\bbR^n)]^N},   \lb{Z9.61} \\
& \hspace*{-4.05cm} 1 < p < q < \infty, \; p^{-1} = q^{-1} + 2n^{-1}, \; s = 2qn [2n + 4q - qn]^{-1} \geq 1, \no
\end{align}
so that, in particular, 
\begin{equation}\lb{Z4.32a}
p = qn/(n + 2q), \quad 2n + 4q - qn > 0. 
\end{equation}

We note that if $\Psi_{\pm m}\in [L^2(\bbR^n)]^N$, then an application of \eqref{Z9.38} after the second inequality in \eqref{Z9.61a} yields (for some constant $\widetilde{C}_{p,q,n}^{(1)}\in (0,\infty)$)
\begin{align}
& \big\|\cR_{1,n} \big(\langle \dott \rangle^{-2} \|\Phi_{\pm m}(\dott)\|_{\bbC^N}\big)
\big\|_{L^q(\bbR^n)}     \no \\
& \quad \leq C_{p,q,n}^{(1)} \big\|\langle \dott \rangle^{-2} \|\Phi_{\pm m}(\dott)\|_{\bbC^N}\big\|_{L^p(\bbR^n)}   
\no \\
& \quad \leq C_{p,q,n}^{(1)} \big\| \langle \dott\rangle^{-2}\|V_2(\dott)\|_{\bbC^{N\times N}}\|\Psi_{\pm m}(\dott)\|_{\bbC^N}\big\|_{L^p(\bbR^n)}\no\\
&\quad \leq \widetilde{C}_{p,q,n}^{(1)}\big\|\langle\dott\rangle^{-4}\|\Psi_{\pm m}(\dott)_{\bbC^N}\big\|_{L^p(\bbR^n)}\no\\
&\quad \leq \widetilde{C}_{p,q,n}^{(1)}\big\|\langle \dott\rangle^{-4}\big\|_{L^s(\bbR^n)}\|\Psi_{\pm m}\|_{[L^2(\bbR^n)]^N}, \lb{Z4.49}
\end{align}
under the same conditions in \eqref{Z9.61a} and \eqref{Z9.61a}.  Note that $\|V_2(\dott)\|_{\bbC^{N\times N}}\leq c \langle\dott\rangle^{-2}$ has been used in \eqref{Z4.49}.  Similarly, one obtains (for some constant $\widetilde{C}_{p,q,n}^{(2)}\in (0,\infty)$)
\begin{align}
\big\|\cR_{2,n} \big(\langle \dott \rangle^{-2} \|\Phi_{\pm m}(\dott)\|_{\bbC^N}\big)
\big\|_{L^q(\bbR^n)}\leq \widetilde{C}_{p,q,n}^{(2)}\big\|\langle \dott\rangle^{-4}\big\|_{L^s(\bbR^n)}\|\Psi_{\pm m}\|_{[L^2(\bbR^n)]^N}\lb{Zz4.52}
\end{align}
under the same conditions in \eqref{Z9.61} and \eqref{Z4.32a}.

\noindent 
$(a)$ The case $n=3$:  The first condition in \eqref{Z4.30a} implies $p=3q/(3+q)$.  The condition $p>1$ requires $q\in (3/2,\infty)$, and the condition $s=6q/(6-q)\geq 1$ further requires $q\in (3/2,6)$.  For $q\in (3/2,6)$, one infers $p\in (1,2)$ and $s\in (2,\infty)$; the latter ensures $\|\langle \dott\rangle^{-2}\|_{L^s(\bbR^3)}<\infty$.  Hence, one may take
\begin{equation}
q\in (3/2,6),\quad p=3q/(3+q),\quad s=6q/(6-q)
\end{equation}
throughout \eqref{Z9.61a} to obtain
\begin{equation}\lb{Z4.35}
\cR_{1,3} \big(\langle \dott \rangle^{-2} \|\Phi_{\pm m}(\dott)\|_{\bbC^4}\big) \in L^q(\bbR^3),\quad q\in (3/2,6).
\end{equation}
The first condition in \eqref{Z4.32a} implies $p=3q/(3+2q)$.  The condition $p>1$ requires $q\in (3,\infty)$, which implies $s=6q/(6+q)\in (2,6)$, ensuring $\|\langle \dott\rangle^{-2}\|_{L^s(\bbR^3)}<\infty$.  Hence, one may take
\begin{equation}
q\in (3,\infty),\quad p=3q/(3+2q),\quad s=6q/(6+q)
\end{equation}
throughout \eqref{Z9.61} to obtain
\begin{equation}\lb{Z4.36}
\cR_{2,3} \big(\langle \dott \rangle^{-2} \|\Phi_{\pm m}(\dott)\|_{\bbC^4}\big) \in L^q(\bbR^3),\quad q\in (3,\infty).
\end{equation}
The containments in \eqref{Z4.35} and \eqref{Z4.36}, combined with the estimate in \eqref{Z9.41} imply
\begin{equation}\lb{Z4.37}
\|\Psi_{\pm m}(\, \cdot \,)\|_{\bbC^4} \in L^q(\bbR^3) \, \text{ and hence, } \, \Psi_{\pm m} \in [L^q(\bbR^3)]^4, 
\quad q\in (3,6).
\end{equation} 

To prove that $\nabla \Psi_{\pm m}\in [L^2(\bbR^3)]^{4 \times 3}$, it suffices to argue as follows.  By \eqref{Z9.36a}, \eqref{Zz3.31}, and \eqref{9.75}, one computes
\begin{align}
\begin{split} 
-i\alpha \cdot [\nabla \Psi_{\pm m}](x) 
&= i\alpha \cdot 4^{-1}\pi^{-n/2}\Gamma((n-2)/2)\int_{\bbR^3}d^3y\, |x-y|^{1-n}\frac{(x-y)}{|x-y|}   \\
&\quad \times V_1^*(y)\Phi_{\pm m}(y) [m\beta \pm mI_4]+ (V_1^*\Phi_{\pm m})(x),\quad x\in \bbR^3.\lb{Zz4.43}
\end{split} 
\end{align}
In addition,
\begin{align}
&\bigg\| \int_{\bbR^3}d^3y\, |x-y|^{1-n}\frac{(x-y)}{|x-y|}V_1^*(y)\Phi_{\pm m}(y)\bigg\|_{\bbC^4}\no\\
&\quad \leq \int_{\bbR^3}d^3y\, |x-y|^{1-n}\|V_1^*(y)\|_{\bbC^{4 \times 4}}\|\Phi_{\pm m}(y)\|_{\bbC^4}\no\\
&\quad \leq c\int_{\bbR^3}d^3y\, |x-y|^{1-n}\langle y \rangle^{-2}\|\Phi_{\pm m}(y)\|_{\bbC^4},
\end{align}
and an application of Theorem \ref{t5.6}\,$(ii)$ with $c=0$, $d=1$, $p=p'=2$ then implies
\begin{equation}\lb{Zz4.45}
\int_{\bbR^3}d^3y\, |\dott-y|^{1-n}\langle y \rangle^{-2}\|\Phi_{\pm m}(y)\|_{\bbC^4}\in [L^2(\bbR^3)]^4.
\end{equation}
Since $V_1^*\Phi_{\pm m}\in [L^3(\bbR^3)]^{4}$, the containment in \eqref{Zz4.45} and the identity in \eqref{Zz4.43} 
imply $\nabla \Psi_{\pm m}\in [L^2(\bbR^3)]^{4 \times 3}$.  

Finally, if $\Psi_{\pm m}\in [L^2(\bbR^3)]^4$, then together with \eqref{Z4.37}, one obtains \eqref{Z3.109} for $n=3$.

If $\Psi_{\pm m}$ is a resonance function (resp., eigenfunction), then it has been shown that $\Psi_{\pm m}\in [L^q(\bbR^3)]^4$ for all $q\in (3,6)$ (resp., $q\in [2,6)$).  To prove that $\Psi_{\pm m}\in [L^{\infty}(\bbR^3)]^4$, one applies \eqref{Z9.38} and the condition $\|V_2(\dott)\|_{\bbC^{4\times 4}}\leq C\langle\dott\rangle^{-2}$ for some $C\in (0,\infty)$ in \eqref{Zz4.19} to obtain
\begin{align}
\|\Psi_{\pm m}(x)\|_{\bbC^4}&\leq \widetilde{d}_3\bigg\{ \int_{\bbR^3}d^3y\, |x-y|^{-1}\langle y\rangle^{-4}\|\Psi_{\pm m}(y)\|_{\bbC^4}\no\\
&\quad + \int_{\bbR^3}d^3y\, |x-y|^{-2}\langle y\rangle^{-4}\|\Psi_{\pm m}(x)\|_{\bbC^4}\bigg\},\quad x\in \bbR^3.   \lb{3.142b}
\end{align}
Here $\widetilde{d}_3\in (0,\infty)$ is an appropriate $x$-independent constant.  The first integral on the right-hand side in \eqref{3.142b} may be estimated with H\"older's inequality (with conjugate exponents $p=5/4$ and $p'=5$) as follows:
\begin{align}
&\int_{\bbR^3}d^3y\, |x-y|^{-1}\langle y\rangle^{-4}\|\Psi_{\pm m}(y)\|_{\bbC^4}\lb{3.143b}\\
&\quad \leq \bigg(\int_{\bbR^3}d^3y\, |x-y|^{-5/4}\langle y\rangle^{-5}\bigg)^{4/5}\bigg(\int_{\bbR^3}d^3y\, \|\Psi_{\pm m}(y)\|_{\bbC^4}^5\bigg)^{1/5},\quad x\in \bbR^3.\no
\end{align}
The second integral on the right-hand side in \eqref{3.143b} is finite since a resonance function (resp., eigenfunction) satisfies $\Psi_{\pm m}\in [L^5(\bbR^3)]^4$.  To estimate the first integral on the right-hand side in \eqref{3.143b}, ones applies Lemma \ref{l3.12} with the choices $x_1=x$, $k=5/4$, $\ell=0$, $\beta=4$, and 
$\varepsilon =1$, so that
\begin{equation}\lb{3.144b}
\int_{\bbR^3}d^3y\, |x-y|^{-5/4}\langle y\rangle^{-5} \leq C_{3,5/4,0,4,1},\quad x\in \bbR^3.
\end{equation}
Along similar lines, the second integral on the right-hand side in \eqref{3.142b} may be estimated with H\"older's inequality (with conjugate exponents $p=5/4$ and $p'=5$) as follows:
\begin{align}
&\int_{\bbR^3}d^3y\, |x-y|^{-2}\langle y\rangle^{-4}\|\Psi_{\pm m}(y)\|_{\bbC^4}\lb{3.145b}\\
&\quad \leq \bigg(\int_{\bbR^3}d^3y\, |x-y|^{-5/2}\langle y\rangle^{-5}\bigg)^{4/5}\bigg(\int_{\bbR^3}d^3y\, \|\Psi_{\pm m}(y)\|_{\bbC^4}^5\bigg)^{1/5},\quad x\in \bbR^3.\no
\end{align}
An application of Lemma \ref{l3.12} with $x_1=x$, $k=5/2$, $\ell=0$, $\beta=4$, and $\varepsilon = 1$, yields
\begin{equation}\lb{3.146b}
\int_{\bbR^3}d^3y\, |x-y|^{-5/2}\langle y\rangle^{-5} \leq C_{3,5/2,0,4,1},\quad x\in \bbR^3.
\end{equation}
Therefore, combining \eqref{3.142b}, \eqref{3.143b}, \eqref{3.144b}, \eqref{3.145b}, and \eqref{3.146b}, one obtains
\begin{align}
\|\Psi_{\pm m}(x)\|_{\bbC^4}\leq \widetilde{d}_3\Big\{C_{3,5/4,0,4,1}^{4/5} + C_{3,5/2,0,4,1}^{4/5} \Big\} \|\Psi_{\pm m}\|_{[L^5(\bbR^3)]^4},\quad x\in \bbR^3,
\end{align}
and it follows that $\Psi_{\pm m}\in [L^{\infty}(\bbR^3)]^4$, settling the case $n=3$. We will illustrate in 
Remark \ref{rA.3} that the same line of reasoning fails for $n \geq 4$. 

\noindent 
$(b)$ The case $n=4$:  The first condition in \eqref{Z4.30a} implies $p=4q/(4+q)$.  The condition $p>1$ requires $q\in(4/3,\infty)$, and the condition $s=4q/(4-q)\geq 1$ further requires $q\in (4/3,4)$.  For $q\in (4/3,4)$, one infers $p\in (1,2)$ and $s\in (2,\infty)$; the latter ensures $\|\langle \dott \rangle^{-2}\|_{L^s(\bbR^4)}<\infty$.  Hence, one may take
\begin{equation}
q\in (4/3,4),\quad p=4q/(4+q),\quad s=4q/(4-q)
\end{equation}
throughout \eqref{Z9.61a} to obtain
\begin{equation}\lb{Z4.43}
\cR_{1,4} \big(\langle \dott \rangle^{-2} \|\Phi_{\pm m}(\dott)\|_{\bbC^{4}}\big) \in L^q(\bbR^4),\quad q\in (4/3,4).
\end{equation}

The first condition in \eqref{Z4.32a} implies $p=2q/(2+q)$.  The condition $p>1$ requires $q\in (2,\infty)$, and the condition on $s$ in \eqref{Z9.61} reduces to $s=q$, so the requirement $s\geq 1$ is satisfied for all $q\in (2,\infty)$.  For $q\in (2,\infty)$, $p\in (1,2)$, and $s\in (2,\infty)$; the latter ensures $\|\langle \dott\rangle^{-2}\|_{L^s(\bbR^4)}<\infty$.  Hence, one may take
\begin{equation}
q\in (2,\infty),\quad p=2q/(2+q),\quad s=q
\end{equation}
throughout \eqref{Z9.61} to obtain
\begin{equation}\lb{Z4.45}
\cR_{2,4} \big(\langle \dott \rangle^{-2} \|\Phi_{\pm m}(\dott)\|_{\bbC^{4}}\big) \in L^q(\bbR^4),\quad q\in (2,\infty).
\end{equation}

The containments in \eqref{Z4.43} and \eqref{Z4.45}, combined with the estimate in \eqref{Z9.41} imply
\begin{equation}
\|\Psi_{\pm m}(\dott)\|_{\bbC^{4}} \in L^q(\bbR^4) \, \text{ and hence, } \, \Psi_{\pm m} \in [L^q(\bbR^4)]^{4}, 
\quad q\in (2,4).    \lb{Z3.148}
\end{equation}

One proves $\nabla \Psi_{\pm m}\in [L^2(\bbR^4)]^{4 \times 4}$ in a manner entirely analogous to \eqref{Zz4.43}--\eqref{Zz4.45}.  Finally, if $\Psi_{\pm m}\in [L^2(\bbR^4)]^{4}$, then together with \eqref{Z3.148}, one obtains \eqref{Z3.109} for $n=4$.  Therefore, the $n=4$ case is settled.

\noindent 
$(c)$ The case $n\geq 5$:  The first condition in \eqref{Z4.30a} implies $p=qn/(n+q)$.  The condition $p>1$ requires $q\in (n/(n-1),\infty)$.  The condition $2n+2q-qn>0$ further requires $q<2n/(n-2)$,  and the condition $s=2qn/(2n+2q-qn)\geq 1$ further requires $q\geq 2n/(3n-2)$.  Moreover, $\|\langle \dott\rangle^{-4}\|_{L^s(\bbR^n)}<\infty$ if and only if $n/4<s=2qn/(2n+2q-qn)$, which requires $q>2n/(n+6)$.  Thus, one may take
\begin{align}
\begin{split}
&q\in \begin{cases}
(n/(n-1),2n/(n-2)),& 5\leq n\leq 8,\\
(2n/(n+6),2n/(n-2)),&n\geq 9,
\end{cases}\\
&p = qn/(n+q),\quad s=2qn/(2n+2q-qn)
\end{split}\lb{Zz4.53}
\end{align}
throughout \eqref{Z4.49} to obtain
\begin{equation}\lb{Zz4.48}
\cR_{1,n} \big(\langle \dott \rangle^{-2} \|\Phi_{\pm m}(\dott)\|_{\bbC^{N}}\big) \in L^q(\bbR^n),\quad q\in \begin{cases}
(n/(n-1),2n/(n-2)),& 5\leq n\leq 8,\\
(2n/(n+6),2n/(n-2)),&n\geq 9.
\end{cases}
\end{equation}

The first condition in \eqref{Z4.32a} implies $p=qn/(n+2q)$.  The condition $p>1$ requires $q\in (2/(n-2),\infty)$.  The condition $2n+4q-qn>0$ further requires $q<2n/(n-4)$, and the condition $s=2qn/(2n+4q-qn)\geq 1$ further requires $q\geq 2n/(3n-4)$.  Moreover, $\|\langle \dott\rangle^{-4}\|_{L^2(\bbR^n)}<\infty$ if and only if $n/4<s=2qn/(2n+4q-qn)$, which requires $q>2n/(n+4)$.  Hence, one may take
\begin{equation}
q\in (2n/(n+4),2n/(n-4)),\quad p=qn/(n+2q),\quad s=2qn/(2n+4q-qn)
\end{equation}
throughout \eqref{Z9.61} to obtain
\begin{equation}\lb{Z4.50}
\cR_{2,n} \big(\langle \dott \rangle^{-2} \|\Phi_{\pm m}(\dott)\|_{\bbC^{N}}\big) \in L^q(\bbR^n),\quad q\in (2n/(n+8),2n/(n-4)).
\end{equation}

The containments in \eqref{Zz4.48} and \eqref{Z4.50}, combined with the estimate in \eqref{Z9.41}, imply
\begin{align}
\begin{split}
\|\Psi_{\pm m}(\dott)\|_{\bbC^N} \in L^q(\bbR^n) \, \text{ and hence, } \, \Psi_{\pm m} \in [L^q(\bbR^n)]^N,&  \\
q\in \begin{cases}
(n/(n-1),2n/(n-2)),& 5\leq n\leq 8,\\
(2n/(n+6),2n/(n-4)),& n\geq 9.
\end{cases}&  
\end{split}
\end{align}

Finally (following the proof of \cite[Lemma~7.4]{EGG18}), we show that if
\begin{equation}
\ker(H(m)\mp mI_{[L^2(\bbR^n)]^N}) \supsetneqq \{0\},
\end{equation}
then also 
\begin{equation}
\ker \big(\big[I_{[L^2(\bbR^n)]^N} + \ol{V_2 (H_0(m) - (\pm m + i0) I_{[L^2(\bbR^n)]^N})^{-1} V_1^*}\big]\big) 
\supsetneqq \{0\}.
\end{equation}
Indeed, if $0 \neq \Psi_{\pm m} \in \ker(H(m)\mp mI_{[L^2(\bbR^n)]^N})$, then 
\begin{equation}
\Phi_{\pm m} := V_2 \Psi_{\pm m} = U_V V_1 \Psi_{\pm m} \in [L^2(\bbR^n)]^N,
\end{equation}
and hence $V_1^* \Phi_{\pm m} \in [L^2(\bbR^n)]^N$. Therefore, $H(m) \Psi_{\pm m} = \pm m \Psi_{\pm m}$ yields 
\begin{align}
i \alpha \cdot \nabla  \Psi_{\pm m} &= (m\beta \mp mI_N)\Psi_{\pm m} + V \Psi_{\pm m}\no\\
&= (m\beta \mp mI_N)\Psi_{\pm m} + V_1^* V_2 \Psi_{\pm m}\no\\
&= (m\beta \mp mI_N)\Psi_{\pm m} + V_1^*\Phi_{\pm m}.
\end{align}

Hence,
\begin{align}
&(-i\alpha\cdot \nabla + m\beta \mp mI_N)\big[\Psi_{\pm m} + (H_0(m) - (\mp m + i 0) I_{[L^2(\bbR^n)]^N})^{-1} V_1^* \Phi_{\pm m}\big] (x)\no\\
&\quad= [(-i\alpha\cdot \nabla  +m\beta \mp mI_N)\Psi_{\pm m}](x)\no\\
&\qquad  + (-i\alpha\cdot \nabla  +m\beta \mp mI_N)[R_{0,\pm m}\ast (V_1^*\Phi_{\pm m})](x)\no\\
&\quad = -(V_1^*\Phi_{\pm m})(x) + [((-i\alpha\cdot \nabla  +m\beta \mp mI_N)R_{0,\pm m})\ast (V_1^*\Phi_{\pm m})](x)\no\\
&\quad = -(V_1^*\Phi_{\pm m})(x) + (V_1^*\Phi_{\pm m})(x) = 0,\quad x\in \bbR^n.
\end{align}
In addition, $\Psi_{\pm m}\in [L^2(\bbR^n)]^N$, and the same arguments as those in \eqref{Z4.22}--\eqref{Z4.23} and \eqref{Zz4.52}--\eqref{Zz4.53} (which now extend to include $n\in \{3,4\}$, due to the assumption that $\Psi_{\pm m}\in [L^2(\bbR^n)]^N$) imply  $[R_{0,\pm m}\ast (V_1^*\Phi_{\pm m})]\in [L^2(\bbR^n)]^N$, so that
\begin{equation}
\big[\Psi_{\pm m} + (H_0(m) - (\mp m + i 0) I_{[L^2(\bbR^n)]^N})^{-1} V_1^* \Phi_{\pm m}\big] \in [L^2(\bbR^n)]^N.
\end{equation}
It follows that
\begin{equation}
\wti \Psi_{\pm m} := \big[\Psi_{\pm m} + (H_0(m) - (\mp m + i 0) I_{[L^2(\bbR^n)]^N})^{-1} V_1^* \Phi_{\pm m}\big]=0;
\end{equation}
otherwise, $\pm m$ is an eigenvalue of $H_0(m)$ and $\wti \Psi_{\pm m}$ is a corresponding eigenfunction.  However, this contradicts the fact that the spectrum of $H_0(m)$ is purely absolutely continuous.  Hence,
\begin{equation}
\Psi_{\pm m} = - (H_0(m) - (\mp m + i 0) I_{[L^2(\bbR^n)]^N})^{-1} V_1^* \Phi_{\pm m}.
\end{equation} 
Thus, $\Phi_{\pm m} \neq 0$, and 
\begin{align}
0 &= V_2 \Psi_{\pm m} + V_2 (H_0(m) - (\pm m + i 0) I_{[L^2(\bbR^n)]^N})^{-1}V_1^* \Phi_{\pm m}   \no \\
&= \big[I_{[L^2(\bbR^n)]^N} + \ol{V_2 (H_0(m) - (\pm m + i0) I_{[L^2(\bbR^n)]^N})^{-1} V_1^*}\big] \Phi_{\pm m}, 
\end{align}
that is, 
\begin{equation} 
0 \neq \Phi_{\pm m} \in 
\ker \big(\big[I_{[L^2(\bbR^n)]^N} + \ol{V_2 (H_0(m) - (\pm m + i0) I_{[L^2(\bbR^n)]^N})^{-1} V_1^*}\big]\big).
\end{equation}  
This concludes the proof. 
\end{proof}
%%%%%%%

%%%%%%%
\begin{remark} \lb{r9.8c} 
$(i)$ For basics on the Birman--Schwinger principle in the concrete case of massive Dirac operators,    see \cite{EGG17}, \cite{EG17}--\cite{EGT19}, \cite{Kl85}. \\[1mm] 
$(ii)$ In physical notation (see, e.g.,  \cite{KOY15}, \cite{KY01}, \cite{Kl90}, \cite[Sect.~4.6]{Th92} for details), the threshold resonances at energies $\pm m$ in Cases $(II)$ and $(IV)$ for $n=3$ correspond to eigenvalues $\pm 1$ of the spin-orbit operator (cf, \cite{KOY15}, \cite{KY01}, \cite[eq.~(4.105), p.~125]{Th92}) in the case where $V$ is spherically symmetric, see the discussion in \cite{EGT19}.  \\[1mm] 
$(iii)$ The absence of threshold resonances for massive Dirac operators in dimensions $n \geq 5$ as contained in Theorem \ref{Zt4.3}\,$(ii)$ appears to have gone unnoticed in the literature.
${}$ \hfill $\diamond$
\end{remark}
%%%%%%%

%%%%%%%%%%%%%%%%%%%%%%%%%%%%%%%%%%%%%
%%%%%%%%%%%%%% appendices %%%%%%%%%%%%%%%%%
\appendix
%%%%%%%%%%%%%%% Appendix A %%%%%%%%%%%%%%%%
\section{Some Remarks on $L^{\infty}(\bbR^n)$-Properties \\ of Threshold Eigenfunctions} \lb{sA}
\renewcommand{\theequation}{A.\arabic{equation}}
\renewcommand{\thetheorem}{A.\arabic{theorem}}
\setcounter{theorem}{0} \setcounter{equation}{0}
%%%%%%%%%%%%%%%%%%%%%%%%%%%%%%%%%%%%%
%%%%%%%%%%%%%%%%%%%%%%%%%%%%%%%%%%%%%

In this appendix we collect some negative results on $L^{\infty}(\bbR^n)$-properties of threshold 
eigenfunctions. We hope to return to this issue at a later date. 

We start with the case of Schr\"odinger operators:

%%%%%%%
\begin{remark} \lb{rA.1} 
We briefly indicate why the line of reasoning used in the proof of $\psi \in L^{\infty}(\bbR^n)$ for $3 \leq n \leq 7$ 
in Theorem \ref{t9.7a} is bound to fail for $n \geq 8$. 

Suppose $n\geq 8$.  Then \eqref{3.54c} continues to hold and by \eqref{3.49A}, $\psi\in L^q(\bbR^n)$ for all
\begin{equation}\lb{3.61cc}
q\in (2n/(n+4),2n/(n-4)).
\end{equation}
One notes that $q$ satisfies \eqref{3.61cc} if and only if its conjugate exponent $q':= q/(q-1)$ satisfies
\begin{equation}\lb{3.62c}
q' \in (2n/(n+4),2n/(n-4)).
\end{equation}
Applying H\"older's inequality with conjugate exponents $q'$ and $q$ on the right-hand side in \eqref{3.54c} yields
\begin{align}
|\psi(x)|\leq \widetilde{d}_n \bigg(\int_{\bbR^n}d^ny\, |x-y|^{(2-n)q'}\langle y\rangle^{-4q'} \bigg)^{1/q'}\bigg(\int_{\bbR^n}d^ny\,|\psi(y)|^q \bigg)^{1/q},\quad x\in \bbR^n.\lb{3.63c}
\end{align}
The second integral on the right-hand side in \eqref{3.63c} is finite for any choice of $q'$ which satisfies \eqref{3.62c}. When applying Lemma \ref{l3.12} to estimate the first integral on the right-hand side in \eqref{3.63c}, one must choose $k=(n-2)q'$.  The hypotheses in Lemma \ref{l3.12} require $k\in [0,n)$.  Hence, one must choose $q'$ so that $(n-2)q' < n$, that is, one must choose $q' < n/(n-2)$.  However, there is no $q'$ which simultaneously satisfies \eqref{3.62c} and $q' < n/(n-2)$ since
\begin{equation}
\frac{2n}{n+4} - \frac{n}{n-2} = \frac{n(n-8)}{(n-2)(n+4)} > 0 \, \text{ implies } \, \frac{n}{n-2}<\frac{2n}{n+4}.
\end{equation}
Thus, $q' < n/(n-2)$ yields $q' < 2n/(n+4)$, so that $q'$ does not satisfy \eqref{3.62c}.   \hfill $\diamond$
\end{remark}
%%%%%%%

Next, we illustrate the case of massless Dirac operators:

%%%%%%%
\begin{remark} \lb{rA.2} 
We briefly indicate why the line of reasoning used in the proof of $\Psi \in L^{\infty}(\bbR^n)^N$ for $n = 2,3$ 
in Theorem \ref{t9.7} is bound to fail for $n \geq 4$. 

We start with the case $n=4$. Then the inequality in \eqref{9.41} implies, when combined with \eqref{9.38},
\begin{align}\lb{3.88b}
\|\Psi_0(x)\|_{\bbC^4}\leq \widetilde{d}_4 \int_{\bbR^4}d^4y\, |x-y|^{-3}\langle y\rangle^{-2}\|\Psi_0(y)\|_{\bbC^4},\quad x\in \bbR^4.
\end{align}
In order to invoke the property $\Psi_0\in [L^q(\bbR^4)]^4$, $q\in (4/3,4)$, one applies H\"older's inequality with conjugate indices $q'$ and $q$ on the right-hand side in \eqref{3.88b} to obtain
\begin{equation}\lb{3.89b}
\|\Psi_0(x)\|_{\bbC^4}\leq \widetilde{d}_4 \bigg(\int_{\bbR^4}d^4y\, |x-y|^{-3q'}\langle y\rangle^{-2q'} \bigg)^{1/q'}\bigg(\int_{\bbR^4}d^4y\, \|\Psi_0(y)\|_{\bbC^4}^q \bigg)^{1/q},\quad x\in \bbR^4.
\end{equation}
The second integral on the right-hand side in \eqref{3.89b} is finite for any $q\in (4/3,4)$.  The requirement $q\in (4/3,4)$ implies
\begin{equation}\lb{3.90b}
q'\in (4/3,4).
\end{equation}
In order to apply Lemma \ref{l3.12} to the first integral on the right-hand side in \eqref{3.89b}, one would choose $k=3q'$ and $\ell=0$.  The hypotheses (viz., $k\in [0,n)$) of Lemma \ref{l3.12} require
\begin{equation}
k=3q'<4,
\end{equation}
which is satisfied if and only if $q'<4/3$. However, the condition $q'<4/3$ is incompatible with \eqref{3.90b}.  

An analogous problem is also encountered for all other dimensions $n\geq 5$:  Indeed, for $n\geq 5$, \eqref{9.38} and \eqref{9.41} combine to yield
\begin{equation}\lb{3.92b}
\begin{split}
\|\Psi_0(x)\|_{\bbC^N}\leq \widetilde{d}_n \bigg(\int_{\bbR^n}d^ny\, |x-y|^{(1-n)q'}\langle y\rangle^{-2q'} \bigg)^{1/q'}\bigg(\int_{\bbR^n}d^ny\, \|\Psi_0(y)\|_{\bbC^N}^q \bigg)^{1/q},&\\
x\in \bbR^n,&
\end{split}
\end{equation}
for conjugate exponents $q'$ and $q$.  The second integral on the right-hand side in \eqref{3.92b} is finite for all $q\in (2n/(n+2),2n/(n-2))$.  For $q\in (2n/(n+2),2n/(n-2))$, one infers
\begin{equation}\lb{3.93b}
q'\in (2n/(n+2),2n/(n-2)).
\end{equation}
To apply Lemma \ref{l3.12} to the first integral on the right-hand side in \eqref{3.92b}, one must choose $k=(n-1)q'$.  The condition $k\in [0,n)$ in Lemma \ref{l3.12} requires $(n-1)q'<n$, which is equivalent to
\begin{equation}\lb{3.94b}
q'<\frac{n}{n-1}.
\end{equation}
However, the condition in \eqref{3.94b} is incompatible with \eqref{3.93b} in the sense that there is no $q'\in [1,\infty)$ which satisfies both \eqref{3.93b} and \eqref{3.94b} simultaneously. \hfill $\diamond$
\end{remark} 
%%%%%%%%

Finally, we also illustrate the case of massive Dirac operators:

%%%%%%%
\begin{remark} \lb{rA.3} 
We briefly indicate why the line of reasoning used in the proof of $\Psi \in L^{\infty}(\bbR^3)^4$  
in Theorem \ref{Zt4.3} is bound to fail for $n \geq 4$. 

For simplicity we focus again on the case $n=4$. 
Applying \eqref{Z9.38} and the condition $\|V_2(\dott)\|_{\bbC^{4\times 4}}\leq C\langle\dott\rangle^{-2}$ for some $C\in (0,\infty)$ in \eqref{Zz4.19}, one obtains
\begin{align}
\|\Psi_{\pm m}(x)\|_{\bbC^{4}}&\leq \widetilde{d}_4\bigg\{ \int_{\bbR^4}d^4y\, |x-y|^{-2}\langle y\rangle^{-4}\|\Psi_{\pm m}(y)\|_{\bbC^{4}}\no\\
&\quad + \int_{\bbR^4}d^4y\, |x-y|^{-3}\langle y\rangle^{-4}\|\Psi_{\pm m}(x)\|_{\bbC^{4}}\bigg\},\quad x\in \bbR^4,\lb{3.148b}
\end{align}
where $\widetilde{d}_4\in (0,\infty)$ is an appropriate $x$-independent constant.

Applying H\"older's inequality (with conjugate exponents $p=3/2$ and $p'=3$) to the first integral on the right-hand side in \eqref{3.148b}, one obtains
\begin{align}
&\int_{\bbR^4}d^4y\, |x-y|^{-2}\langle y\rangle^{-4}\|\Psi_{\pm m}(y)\|_{\bbC^{4}}\no\\
&\quad \leq \bigg(\int_{\bbR^4}d^4y\, |x-y|^{-3}\langle y\rangle^{-6}\bigg)^{2/3}\bigg(\int_{\bbR^4}d^4y\, \|\Psi_{\pm m}(y)\|_{\bbC^{4}}^3\bigg)^{1/3},\quad x\in \bbR^4.\lb{3.149b}
\end{align}
The second integral on the right-hand side in \eqref{3.149b} is finite since a resonance function (resp., eigenfunction) satisfies $\Psi_{\pm m}\in [L^3(\bbR^4)]^{4}$. By Lemma \ref{l3.12} with $x_1=x$, $k=3$, $\ell=0$, and $\beta=5$,
\begin{equation}\lb{3.150b}
\int_{\bbR^4}d^4y\, |x-y|^{-3}\langle y\rangle^{-6} \leq C_{4,3,0,5,1},\quad x\in \bbR^4.
\end{equation}
Applying H\"older's inequality (with conjugate indices $q'$ and $q$) to the second integral on the right-hand side in \eqref{3.148b}, one obtains
\begin{align}
&\int_{\bbR^4}d^4y\, |x-y|^{-3}\langle y\rangle^{-4}\|\Psi_{\pm m}(y)\|_{\bbC^{4}}\lb{3.151b}\\
&\quad \leq \bigg(\int_{\bbR^4}d^4y\, |x-y|^{-3q'}\langle y\rangle^{-4q'}\bigg)^{1/q'}\bigg(\int_{\bbR^4}d^4y\, \|\Psi_{\pm m}(y)\|_{\bbC^{4}}^q\bigg)^{1/q},\quad x\in \bbR^4.\no
\end{align}
The second integral on the right-hand side in \eqref{3.151b} is finite for all $q\in (2,4)$.  The requirement $q\in (2,4)$ implies
\begin{equation} \lb{3.152b}
q'\in (4/3,2).
\end{equation}
In order to apply Lemma \ref{l3.12} to the first integral on the right-hand side in \eqref{3.151b}, one would choose $k=3q'$ and $\ell=0$.  The hypotheses (viz., $k\in [0,n)$) of Lemma \ref{l3.12} require
\begin{equation}
k=3q'<4,
\end{equation}
which is satisfied if and only if $q'<4/3$.  However, the condition $q'<4/3$ is incompatible with \eqref{3.152b}. 
\hfill $\diamond$ 
\end{remark} 
%%%%%%%%

\medskip

%%%%%%%%%%%%%%%%%%%%%%%%%%%%%%%%%%%%%
\noindent
{\bf Acknowledgments.} 
We are indebted to Alan Carey, Will Green, Jens Kaad, Galina Levitina, Denis Potapov, and Fedor Sukochev 
for many helpful discussions on this subject. We also thank the referee for a critical reading of our manuscript. 

%%%%%%%%%%%%%%%%%%%%%%%%%%%%%%%%%%%%%
 
%%%%%%%%%%%%%%%%%%%%%%%%%%%%%%%%
%%%%%%%%%%%%%%%%%%%%%%%%%%%%%%%%
 
\end{document}